\def\dotfill#1{\cleaders\hbox to #1{.}\hfill}
\def\myrulefill{\leavevmode\leaders\hrule height .7ex width 1ex depth -0.6ex\hfill\kern\z@}
\theoremstyle{plain}
\newtheorem{theorem}{Theorem}[section]
\newtheorem{lemma}{Lemma}[section]
\newtheorem{claim}{Claim}
\newtheorem{definition}{Definition}[section]
\numberwithin{equation}{section}
\numberwithin{definition}{section}
\DeclareFontFamily{U}{mathx}{\hyphenchar\font45}
\DeclareFontShape{U}{mathx}{m}{n}{
      <5> <6> <7> <8> <9> <10>
      <10.95> <12> <14.4> <17.28> <20.74> <24.88>
      mathx10
      }{}
\DeclareSymbolFont{mathx}{U}{mathx}{m}{n}
\DeclareMathAccent{\widecheck}{0}{mathx}{"71}
\DeclareMathAccent{\wideparen}{0}{mathx}{"75}
\newcommand\JM{Mierczy\'nski}
\newcommand\RR{\ensuremath{\mathbb{R}}}
\newcommand\NN{\ensuremath{\mathbb{N}}}
\newcommand\PP{\ensuremath{\mathbb{P}}}
\newcommand{\OFP}{\ensuremath{(\Omega,\mathfrak{F},\PP)}}
\newcommand{\norm}[1]{\ensuremath{\lVert#1\rVert}}
\DeclareMathOperator{\lnplus}{ln^{+}}
\DeclareMathOperator{\spanned}{span}
\definecolor{afb}{rgb}{0.36, 0.54, 0.66}
\definecolor{o}{RGB}{242, 138, 2}
\definecolor{bur}{rgb}{0.5, 0.0, 0.13}
\definecolor{ph}{rgb}{0.87, 0.0, 1.0}
\definecolor{pers}{rgb}{0.0, 0.65, 0.58}
\begin{document}

\title {Oseledets Decomposition on Sub semiflows}
\author{Marek Kryspin}
\address{Faculty of Pure and Applied Mathematics,
Wroc{\l}aw University of Science and Technology,
Wybrze\.ze Wyspia\'nskiego 27, PL-50-370 Wroc{\l}aw, Poland.}
\email{marek.kryspin@pwr.edu.pl}
\thanks{The author is supported by the National Science Centre, Poland (NCN) under the grant Sonata Bis with a number NCN 2020/38/E/ST1/00153}
\subjclass[2020]{Primary: 37H15. Secondary: 34K06}
\begin{abstract}
The existence of the Oseledets decomposition on continuously embedded subspaces of Banach spaces is proved in this paper. Natural assumptions facilitating such transfer of the Oseledets decomposition are presented, notably conditions often met by dynamical systems generated by differential equations.
\end{abstract}
\keywords{Oseledets decomposition, random dynamical systems, random delay differential systems}
\maketitle

\section*{Introduction} 
The primary objective of this work is to delineate natural assumptions facilitating the transfer of the Oseledets decomposition from a Banach space into another Banach space continuously embedded in the former. In general, the Oseledets-type decomposition implies the partitioning/splitting (in the form of a direct sum) of the fiber space, within which a dynamical system operates, into finite-dimensional subspaces, potentially an infinite many of them, each corresponding to a specific Lyapunov exponent. This process gives rise to a hierarchy of subspaces often referred to as the Oseledets filtration or flags. Lyapunov exponents hold paramount significance in the realm of systems dynamics, as they dictate the exponential asymptotic growth rate along trajectories. For results, see, e.g., \cite{Doan-1, F-L-Q, F-S, GTQu, GTQ-2, Lee, Lian-Lu, Varz-Ried}.

There are numerous papers that concentrate on the subject of dynamical systems generated by differential equations and various phase space decompositions; for instance~\cite{Ba-Piazz, Ch-Latushkin, Chow-Leiva-1, Chow-Leiva-2, Doan-1, Doan-2, Marek-Janusz, MiShPart1, MiShPart2, MiShPart3, Martin-Smith-1, Martin-Smith-2}. In many cases of differential equations there is no ``natural'' phase space.  Nonetheless, it appears that there is a scarcity of research papers addressing the topics of regularization and the transfer of Oseledets-type decomposition.   As examples, we can mention here, first, ordinary or partial differential equations with delay, as considered in \cite{Marek-Janusz-Sylvia-Rafa, MiNoOb1, MiNoOb2}, and, second, advection diffusion equations and others, as investigated in \cite{Blu-PS}. The paper~\cite{GTQ-3} is also worth mentioning, where the authors discuss the possibility of transferring the Oseledets decomposition to dense subspaces of the fiber space with separable dual.

The presuppositions of this paper, meaning the requirements set for the solving operators and the fiber space, are often considerably weaker than the specific properties of these (for instance, compactness or separable dual are not necessarily required).  On the other hand, such relaxation of assumptions may lead to new results. To be precise, decompositions in subspaces (in particular, subspaces with a finer/stronger topology are of particular interest here) of Banach spaces (in terms of scales and continuous embeddings) frequently encountered in real-world problems related to differential equations, may arise from this approach. To the author's knowledge, there are no known theorems allowing the transfer of Oseledets decompositions to subspaces without assuming the previously mentioned separability of the fiber space and/or its dual.

To give a flavor of our results, we formulate now some specializations of our main results. It is well-known that a linear ordinary differential equation with delay generates a dynamical system that admits the Oseledets splitting in the fiber space $L_p([-1,0],\RR^N)\oplus\RR^N$. This follows from nice properties of fiber spaces, such as separability and reflexivity. It is also known that in practice, such equations possess a regularisation property that leads to continuous solutions.  
Pullback technique allows us to transfer the Oseledets decomposition to more regular spaces (with finer topology).

\section{Preliminaries and definitions} 

\smallskip\par
In this section, we will present definitions of measurable dynamical systems, measurable linear skew-product semidynamical systems, and the Oseledets decomposition. 

\par\smallskip

\subsection{Measurable dynamical systems}\label{subsec:MDS}

We write $\RR^{+}$ for $[0, \infty)$. For a metric space $S$ by $\mathfrak{B}(S)$ we denote the $\sigma$\nobreakdash-\hspace{0pt}algebra of Borel subsets of $S$. A probability space is a triple $\OFP$, where $\Omega$ is a set, $\mathfrak{F}$ is a $\sigma$\nobreakdash-\hspace{0pt}algebra of subsets of $\Omega$, and $\PP$ is a probability measure defined for all $F \in \mathfrak{F}$.  We always assume that the measure $\PP$ is complete.
\par\smallskip
A {\em measurable dynamical system} on the probability space $\OFP$ is a $(\mathfrak{B}(\RR) \otimes \mathfrak{F},\mathfrak{F})$\nobreakdash-\hspace{0pt}measurable mapping $\theta\colon\RR\times \Omega\to \Omega$ such that
\begin{itemize}[label=\raisebox{0.25ex}{\tiny$\bullet$}]
\item $\theta(0,\omega)=\omega$ for any $\omega  \in \Omega$,
\item $\theta(t+s,w)=\theta(t,\theta(s,\omega))$ for any $\omega  \in \Omega$ and $t,\,s \in \RR^{+}$.  
\end{itemize}
We write $\theta(t,\omega)$ as $\theta_t\omega$. Also, we usually denote measurable dynamical systems by $(\OFP,(\theta_{t})_{t \in \RR})$ or simply by $(\theta_{t})_{t \in \RR}$.\par
A {\em metric dynamical system} is a measurable dynamical system $(\OFP,(\theta_{t})_{t \in \RR})$
such that for each $t\in\RR$  the mapping $\theta_t\colon \Omega\to\Omega$ is $\PP$-preserving (i.e., $\PP(\theta_t^{-1}(F))=\PP(F)$ for any $F\in\mathfrak{F}$ and $t\in\RR$).
A subset $\Omega'\subset\Omega$ is \emph{invariant} if $\theta_t(\Omega')=\Omega'$ for all $t\in\RR$, and the metric dynamical system is said to be \emph{ergodic} if for any invariant subset  $F \in \mathfrak{F}$, either $\PP(F) = 1$ or $\PP(F) = 0$. Throughout the paper we will assume that $\PP$ is ergodic.

\subsection{Measurable linear skew-product semidynamical systems}\label{subsec:MLSPS} 

By a {\em measurable linear skew-product semidynamical system} or {\em semiflow}, 
$\Phi = \allowbreak ((U_\omega(t))_{\omega \in \Omega, t \in \RR^{+}}, \allowbreak (\theta_t)_{t\in\RR})$ on  $X$ covering a metric dynamical system $(\theta_{t})_{t \in \RR}$ we understand a $(\mathfrak{B}(\RR^{+}) \otimes \mathfrak{F} \otimes \mathfrak{B}(X), \mathfrak{B}(X))$\nobreakdash-\hspace{0pt}measurable
mapping
\begin{equation*}
[\, \RR^{+} \times \Omega \times X \ni (t,\omega,u) \mapsto U_{\omega}(t)\,u \in X \,]
\end{equation*}
satisfying
\begin{align}
&U_{\omega}(0) = \mathrm{Id}_{X} \quad & \textrm{for each }\,\omega  \in \Omega, \nonumber 
\\
&U_{\theta_{s}\omega}(t) \circ U_{\omega}(s)= U_{\omega}(t+s) \qquad &\textrm{for each } \,\omega \in \Omega \textrm{ and }  t,\,s \in \RR^{+},
\label{eq-cocycle}
\\
&[\, X \ni u \mapsto U_{\omega}(t)u \in X \,] \in \mathcal{L}(X) & \textrm{for each }\,\omega \in \Omega \textrm{ and } t \in \RR^{+}.\nonumber
\end{align}
Equation~\eqref{eq-cocycle} is called the cocycle property.

By the {\em positive semiorbit} passing through $(\omega,u)\in \Omega\times X$ we understand a $(\mathfrak{B}([0,\infty)),
\allowbreak\mathfrak{B}(X))$\nobreakdash-\hspace{0pt}measurable mapping 
\begin{equation*}
    \big[[0,\infty)\ni t \mapsto U_{\omega}(t)u\in X\big].
\end{equation*}
A {\em negative semiorbit} passing through $(\omega,u)\in \Omega\times X$ is a $(\mathfrak{B}((-\infty,0]),
\mathfrak{B}(X))$\nobreakdash-\hspace{0pt}measurable mapping $\tilde{u} \colon (-\infty,0] \to X$ such that
\begin{itemize}[label=\raisebox{0.25ex}{\tiny$\bullet$}]
\item  $\tilde{u}(0)=u$;
\item  $\tilde{u}(s+t)=U_{\theta_{s}\omega}(t)\tilde{u} (s)$ for each  $s \le 0$, $t\geq 0$  such that  $s+t\le 0$.
\end{itemize}
By a {\em full} or {\em entire} orbit passing through $(\omega,u)\in \Omega\times X$ we understand a $(\mathfrak{B}(\RR),
\allowbreak\mathfrak{B}(X))$\nobreakdash-\hspace{0pt}measurable mapping $\title{u} \colon \RR \to X$ such that
\begin{itemize}[label=\raisebox{0.25ex}{\tiny$\bullet$}]
    \item $\tilde{u}(0)=u$; 
    \item $\tilde{u}(s+t)=U_{\theta_{s}\omega}\tilde{u} (s)$ for each  $s \in\RR$ and $t\geq 0$.
\end{itemize}
From now on, we will focus on separable Banach spaces. Furthermore, based on the results from~\cite[Lemma 5.6 and Corollary 7.3]{Lian-Lu}, we will refrain from discussing measurability in the Grassmanian sense in favor of an equivalent definition of a measurable basis. Let $\Omega_0 \in \mathfrak{F}$.
A family $\{E(\omega)\}_{\omega \in \Omega_0}$ of $l$\nobreakdash-\hspace{0pt}dimensional vector subspaces of $X$ is {\em measurable\/} if there are $(\mathfrak{F},
\mathfrak{B}(X))$\nobreakdash-\hspace{0pt}measurable functions $v_1, \dots, v_l \colon \Omega_0 \to X$ such that $(v_1(\omega), \dots, v_l(\omega))$ forms a basis of $E(\omega)$ for each $\omega \in \Omega_0$.
\par\smallskip
Let $\{E(\omega)\}_{\omega \in \Omega_0}$ be a family of $l$\nobreakdash-\hspace{0pt}dimensional vector subspaces of $X$, and let $\{F(\omega)\}_{\omega \in \Omega_0}$ be a family of $l$\nobreakdash-\hspace{0pt}codimensional closed vector subspaces of $X$, such that $E(\omega) \oplus F(\omega) = X$ for all $\omega \in \Omega_0$.  We define the {\em family of projections associated with the decomposition\/} $E(\omega) \oplus F(\omega) = X$ as $\{P(\omega)\}_{\omega \in \Omega_0}$, where $P(\omega)$ is the linear projection of $X$ onto $F(\omega)$ along $E(\omega)$, for each $\omega \in \Omega_0$.
\par\smallskip
The family of projections associated with the decomposition $E(\omega) \oplus F(\omega) = X$ is called {\em strongly measurable\/} if for each $u \in X$ the mapping $[\, \Omega_0 \ni \omega \mapsto P(\omega)u \in X \,]$ is $(\mathfrak{F}, \mathfrak{B}(X))$\nobreakdash-\hspace{0pt}measurable.
\par\smallskip
We say that the decomposition $E(\omega) \oplus F(\omega) = X$, with
$\{E(\omega)\}_{\omega \in \Omega_0}$ finite\nobreakdash-\hspace{0pt}dimensional, is {\em invariant\/} if $\Omega_0$ is invariant, $U_{\omega}(t)E(\omega) = E(\theta_{t}\omega)$
and $U_{\omega}(t)F(\omega) \subset F(\theta_{t}\omega)$, for each $t \in \RR^+$.
\par\smallskip
A strongly measurable family of projections associated with the invariant decomposition $E(\omega) \oplus F(\omega) = X$ is referred to as {\em tempered\/} if
\begin{equation*} 
\lim\limits_{t \to \pm\infty} \frac{\ln{\norm{P(\theta_{t}\omega)}}}{t} = 0 \qquad \PP\text{-a.e. on }\Omega_0.
\end{equation*}

\subsection{Oseledets decomposition} 
From now on we assume that for a given semiflow 
\begin{enumerate}[label=\textbf{(E\arabic*)}, ref=$\mathrm{E\arabic*}$]
\item\label{a:Lim}  the functions
\begin{itemize}
    \item[] $\bigl[ \,\Omega\ni \omega \mapsto \sup\limits_{0 \le s \le 1} {\lnplus{\norm{U_{\omega}(s)}}} \in \RR^+  \, \bigr] \in L_1\OFP$, 
    \item[] $\bigl[ \, \Omega\ni \omega \mapsto \sup\limits_{0 \le s \le 1}
{\lnplus{\norm{U_{\theta_{s}\omega}(1-s)}}} \in \RR^+ \, \bigr]\in L_1\OFP$. 
\end{itemize}
\end{enumerate}
Then it follows from the Kingman subadditive ergodic theorem that there exists $\lambda_{\mathrm{top}} \in [-\infty, \infty)$ such that
\begin{equation*}
\lim\limits_{t \to \infty} \frac{\ln{\norm{U_{\omega}(t)}}}{t} = \lambda_{\mathrm{top}}
\end{equation*}
for $\PP$-a.e.\ $\omega \in \Omega$, which is referred to as the {\em top Lyapunov exponent\/} of $\Phi$. 

\begin{enumerate}[resume,label=\textbf{(E\arabic*)}, ref=$\mathrm{L\arabic*}$]
\item\label{a:Lap} $\lambda_{\mathrm{top}} > -\infty$.
\end{enumerate}


\begin{definition}[Oseledets decomposition]
$\Phi$ admits an Oseledets decomposition if there exists an invariant subset $\Omega_0 \subset \Omega$, $\PP(\Omega_0) = 1$, with the property that one of the following mutually exclusive cases, \eqref{Osel:case_I} or \eqref{Osel:case_II}, holds:\par
\begin{enumerate}
[label=\textup{\textbf{(O\arabic*)}},ref=O\arabic*]
\item\label{Osel:case_I}
There are $k$  real numbers $\lambda_1 = \lambda_{\mathrm{top}} > \cdots > \lambda_k$, called  the {\em Lyapunov exponents\/} for $\Phi$, $k$ measurable families $\{E_1(\omega)\}_{\omega\in\Omega_0}$, \ldots, $\{E_k(\omega)\}_{\omega \in \Omega_0}$ of finite dimensional vector subspaces, and a family $\{F_{\infty}(\omega)\}_{\omega\in \Omega_0}$ of closed vector subspaces of finite codimension such that\par\smallskip
\begin{enumerate}[label=\textup{(}\textit{\roman*}\textup{)}, ref=\textit{\roman*},leftmargin=5pt]
\item\label{O1:1} for $j = 1,\dots,k$, any $\omega\in\Omega_0$ and $t \ge 0$
\begin{equation*}
    U_{\omega}(t) E_{j}(\omega) = E_{j}(\theta_t\omega )  \quad \& \quad  U_{\omega}(t) F_{\infty}(\omega) \subset F_{\infty}(\theta_t\omega);
\end{equation*}
\item\label{O1:2}
$E_1(\omega) \oplus \ldots \oplus E_k(\omega) \oplus F_{\infty}(\omega) = X$ for any $\omega \in \Omega_0$; we write
\begin{equation*}
     F_j(\omega) \vcentcolon= \bigoplus\limits_{m=j+1}^{k} E_{m}(\omega) \oplus F_{\infty}(\omega) \text{ for } j = 0, \dots, k.
\end{equation*}
In particular, $F_j(\omega) = E_{j+1}(\omega) \oplus F_{j+1}(\omega)$ for $j = 0, 1, \dots, k - 2\,$;
\item\label{O1:3}
for $j = 1,\dots,k$, the families of projections associated with the decomposition
\begin{equation*}
    \Bigl(\bigoplus\limits_{n=1}^{j} E_{n}(\omega)\Bigr) \oplus F_j(\omega) = X
\end{equation*}
is strongly measurable and tempered;
\item\label{O1:4} for $j = 1,\dots, k\,$, any $\omega \in \Omega_0$ and any nonzero $u \in E_j(\omega)$
\begin{equation*}
  \lim_{t\to \infty} \frac{\ln{\norm{U_{\omega}(t)\,u}}}{t}  = \lambda_j;
\end{equation*}
\item\label{O1:7} for $j = 1,\dots, k$ and any $\omega \in \Omega_0$, a nonzero $u \in F_{j - 1}(\omega)$ belongs to $E_{j}(\omega)$ if and only if there exists a negative semiorbit $\tilde{u} \colon (-\infty ,0] \to X$ passing through $(\omega, u)$ such that
\begin{equation*}
\lim\limits_{s \to -\infty} \frac{\ln{\norm{\tilde{u}(s)}}}{s}  = \lambda_{j};
\end{equation*}
\item\label{O1:8} for any $\omega \in \Omega_0$
\begin{equation*}
    \lim\limits_{t\to\infty} \frac{\ln{\norm{U_{\omega}(t){\restriction}_{F_{\infty}(\omega)}}} }{t} = -\infty.
\end{equation*}
\end{enumerate}
In this case, $\{F_1(\omega)\}_{\omega \in \Omega_0}, \ldots, \{F_{k-1}(\omega)\}_{\omega \in \Omega_0}, \{F_{\infty}(\omega)\}_{\omega \in \Omega_0}$ is called the {\em Oseledets filtration\/} for $\Phi$.
\item\label{Osel:case_II}
There is a decreasing sequence of real numbers $\lambda_1 = \lambda_{\mathrm{top}}> \cdots > \lambda_j > \lambda_{j+1} > \cdots {}$ with limit $-\infty$, called  the  Lyapunov exponents for $\Phi$, countably many measurable families $\{E_j(\omega)\}_{\omega \in
\Omega_0}$, $j\in\NN$, of finite dimensional vector subspaces, and countably many families $\{F_j(\omega)\}_{\omega \in \Omega_0}$, $j\in\NN$,
 of closed vector subspaces of finite codimensions, called the {\em Oseledets filtration\/} for $\Phi$,  such that 
\begin{enumerate}[label=\textup{(}\textit{\roman*}\textup{)}, ref=\textit{\roman*}, leftmargin=5pt]
\item\label{O2:1} for $j\in\NN$, any $\omega \in \Omega_0$ and $t \ge 0$
\begin{equation*}
    U_{\omega}(t) E_{j}(\omega) = E_{j}(\theta_t\omega)\quad \& \quad U_{\omega}(t) F_{j}(\omega) \subset F_{j}(\theta_t\omega);
\end{equation*}
\item\label{O2:2} for $j\in\NN$ and any $\omega \in \Omega_0$
\begin{equation*}
    E_1(\omega) \oplus \ldots \oplus E_j(\omega) \oplus F_j(\omega) = X \quad\&\quad F_j(\omega) = E_{j+1}(\omega) \oplus F_{j+1}(\omega); 
\end{equation*}
\item\label{O2:3} for $j\in\NN$,
the families of projections associated with the decompositions
\begin{equation*}
    \Bigl(\bigoplus\limits_{n=1}^{j} E_{n}(\omega)\Bigr) \oplus F_{j}(\omega) = X
\end{equation*}
are strongly measurable and tempered;
\item\label{O2:4} for $j\in\NN$,  any $\omega \in \Omega_0$ and any nonzero $u \in E_j(\omega)$
\begin{equation*}
     \lim_{t \to \infty} \frac{\ln{\norm{U_{\omega}(t)\,u}}}{t}  = \lambda_j;
\end{equation*}
\item\label{O2:6} for $j\in\NN$ and any $\omega \in \Omega_0$, a nonzero $u \in F_{j - 1}(\omega)$ belongs to $E_j(\omega)$ if and only if there exists a negative semiorbit $\tilde{u} \colon (-\infty ,0] \to X$ passing through $(\omega, u)$ such that 
\begin{equation*}
\lim\limits_{s \to -\infty} \frac{\ln{\norm{\tilde{u}(s)}}}{s}  = \lambda_{j};
\end{equation*}
\item\label{O2:7} for $j\in\NN$ and any $\omega \in \Omega_0$
\begin{equation*}
    \lim\limits_{t\to\infty} \frac{\ln{\norm{U_{\omega}(t){\restriction}_{F_j(\omega)}}}}{t}  = \lambda_{j+1};
\end{equation*}
\end{enumerate}
\end{enumerate}
\end{definition}

\section{Sub-semiflows}

Let $\Phi^{(1)} = \allowbreak ((U^{(1)}_\omega(t))_{\omega \in \Omega, t \in \RR^{+}}, (\theta_t)_{t\in\RR})$ and $\Phi^{(2)} = \allowbreak ((U^{(2)}_\omega(t))_{\omega \in \Omega, t \in \RR^{+}}, \allowbreak (\theta_t)_{t\in\RR})$ be measurable linear skew-product semidynamical systems on Banach spaces \linebreak $(X_1, \norm{\cdot}_1)$ and $(X_2,  \norm{\cdot}_2)$ respectively.  The operator norm on $\mathcal{L}(X_m, X_n)$, $m, n  = 1, 2$, will be denoted by $\norm{\cdot}_{m, n}$.   By $\mathcal{L}_{\mathrm{s}}(X_m,X_n)$ we understood the space of linear bounded operators equipped with strong operator topology.

We assume: 

\begin{enumerate}
[label=$\mathrm{\textbf{(A\arabic*)}}$,ref=$\mathrm{A\arabic*}$] 
\item\label{a:emdeding} There is an injective bounded linear map $i\vcentcolon X_{2} \to X_{1}$.
\item \label{a:commutation}  For any $\omega\in\Omega$ and $t\ge 0$ the equality $U^{(1)}_{\omega} (t) \circ i = i\circ U^{(2)}_{\omega}(t)$ holds.
\item\label{a:connector} For any $\omega\in\Omega$ there is a map $U^{(1,2)}_{\omega}(1)\in\mathcal{L}(X_1,X_2)$ such that $U^{(1)}_{\omega}(1)=i\circ U^{(1,2)}_{\omega}(1)$ and $U^{(2)}_{\omega}(1)=U^{(1,2)}_{\omega}(1) \circ i$. Moreover, the operator norm bound $\norm{U^{(1,2)}_{\omega}(1)}_{1, 2}$ is uniform on $\omega$, i.e., there exists $M>0$ such that for all $\omega\in\Omega$ the inequality $\norm{U^{(1,2)}_{\omega}(1)}_{1, 2} \le M$ holds. 
\item\label{a:U_measurable} The operator 
is $U^{(1,2)}_{\omega}(1)$ is $(\mathfrak{F}, \mathfrak{B}(\mathcal{L}_{\mathrm{s}}(X_1,X_2)))$\nobreakdash-\hspace{0pt}measurable i.e., for all $u\in X_1$ the mapping \begin{equation*}
    \big[\,\Omega\ni \omega\mapsto U^{(1,2)}_{\omega}(1)u\in X_2 \,\big]   \text{ is } (\mathfrak{F}, \mathfrak{B}(X_2))\text{\nobreakdash-\hspace{0pt}measurable}. 
\end{equation*}
\item\label{a:separable} $X_1$ and $X_2$ are separable.
\item\label{a:sigma-compatibility} For any $A\in\mathfrak{B}(X_2)$ there exists $B\in \mathfrak{B}(X_1)$ such that $A=i^{-1}(B)$.
\end{enumerate}

\begin{lemma}
\label{lemma:strong_measurable_paring}
   For a separable Banach space $X_1$ the mapping 
    \begin{equation*}
        \big[ \mathcal{L}_{\mathrm{s}}(X_1,X_2) \times X_1 \ni (T,x)\mapsto Tx\in X_2  \big]
    \end{equation*}
    $(\mathfrak{B}(\mathcal{L}_{\mathrm{s}}(X_1,X_2))\otimes\mathfrak{B}(X_1), \mathfrak{B}(X_2) )$\nobreakdash-\hspace{0pt}measurable.
\end{lemma}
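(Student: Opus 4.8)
The plan is to prove joint measurability by the classical Carath\'eodory-type principle: a map that is continuous in one variable and measurable in the other, over a domain that is separable in the continuous variable, is automatically jointly measurable. The natural way to split the two variables here is to treat $T$ as the \emph{measurable} variable and $x$ as the \emph{continuous} variable, exploiting the separability of $X_1$ supplied by \ref{a:separable}. Write $f(T,x) \vcentcolon= Tx$.

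First I would record the two separate regularity properties of $f$. For each fixed $x \in X_1$, the evaluation
\begin{equation*}
    \big[\, \mathcal{L}_{\mathrm{s}}(X_1,X_2) \ni T \mapsto Tx \in X_2 \,\big]
\end{equation*}
is continuous by the very definition of the strong operator topology, hence $(\mathfrak{B}(\mathcal{L}_{\mathrm{s}}(X_1,X_2)), \mathfrak{B}(X_2))$\nobreakdash-measurable. For each fixed $T \in \mathcal{L}(X_1,X_2)$, the map $[\, X_1 \ni x \mapsto Tx \in X_2 \,]$ is continuous, being a bounded linear operator. Next I would build a sequence of jointly measurable approximants: using separability of $X_1$, choose for each $k \in \NN$ a countable Borel partition $\{A^k_n\}_{n}$ of $X_1$ into sets of diameter at most $1/k$, together with a point $x^k_n \in A^k_n$, and set
\begin{equation*}
    f_k(T,x) \vcentcolon= \sum_{n} \mathbbm{1}_{A^k_n}(x)\, T x^k_n .
\end{equation*}
Each summand $(T,x)\mapsto \mathbbm{1}_{A^k_n}(x)\, Tx^k_n$ factors as the map $(T,x)\mapsto(\mathbbm{1}_{A^k_n}(x), Tx^k_n)\in\RR\times X_2$ — jointly measurable since its first component depends measurably only on $x$ and its second SOT\nobreakdash-continuously only on $T$ — composed with the continuous scalar multiplication $\RR\times X_2\to X_2$; as the supports in $x$ are disjoint, $f_k$ is $(\mathfrak{B}(\mathcal{L}_{\mathrm{s}}(X_1,X_2))\otimes\mathfrak{B}(X_1), \mathfrak{B}(X_2))$\nobreakdash-measurable.

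Finally I would pass to the limit. For any $(T,x)$, letting $n(k)$ denote the index with $x \in A^k_{n(k)}$, we have $\norm{x - x^k_{n(k)}}_1 \le 1/k$, so by continuity of $T$ one gets $f_k(T,x) = T x^k_{n(k)} \to Tx = f(T,x)$. Since a pointwise limit of measurable maps into the metric space $X_2$ is measurable, the claim follows. The only points requiring care are the joint measurability of the elementary building blocks, which I handle by factoring through scalar multiplication, and the construction of the Borel partitions, which is immediate from separability; I expect the main conceptual obstacle to be cleanly identifying which variable plays the ``continuous'' and which the ``measurable'' role. Notably, nothing above uses metrizability of the strong operator topology, so no additional hypotheses on $\mathcal{L}_{\mathrm{s}}(X_1,X_2)$ are needed.
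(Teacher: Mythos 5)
Your proof is correct, but it is worth noting that the paper does not actually prove this lemma: its ``proof'' consists solely of a pointer to \cite[Lemma A.6(2)]{GTQu} and \cite[Lemma 6.4.2(i)]{V.B.}. You instead give a self-contained Carath\'eodory-type argument --- continuity in $T$ for fixed $x$ (the definition of the strong operator topology), continuity in $x$ for fixed $T$, a countable Borel partition of the separable space $X_1$ into cells of diameter $\le 1/k$, piecewise-constant-in-$x$ approximants $f_k$, and a pointwise limit --- which is essentially the standard proof underlying the cited results, so nothing is lost and the reader gains an explicit argument that visibly uses only separability of $X_1$ and no metrizability of $\mathcal{L}_{\mathrm{s}}(X_1,X_2)$. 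One small refinement: for the joint measurability of each summand $(T,x)\mapsto \mathbbm{1}_{A^k_n}(x)\,Tx^k_n$, factoring through the continuous scalar multiplication $\RR\times X_2\to X_2$ silently requires $\mathfrak{B}(\RR\times X_2)=\mathfrak{B}(\RR)\otimes\mathfrak{B}(X_2)$, i.e.\ separability of $X_2$ (harmless here, since \eqref{a:separable} assumes it, but not among the lemma's stated hypotheses). You can avoid this entirely by observing that the summand equals the measurable map $(T,x)\mapsto Tx^k_n$ on the product-measurable set $\mathcal{L}_{\mathrm{s}}(X_1,X_2)\times A^k_n$ and $0$ elsewhere, and a map defined piecewise on a countable measurable partition with measurable pieces is measurable. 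With that cosmetic fix the argument is complete.
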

\begin{proof}
    Compare with~\cite[Lemma A.6 (2)]{GTQu} and~\cite[Lemma. 6.4.2(i)]{V.B.}. 
\end{proof} 

\begin{lemma}
\label{lemma:U_(1,2)}
 Assume \eqref{a:emdeding}, \eqref{a:commutation} and \eqref{a:connector}. For any $\omega\in\Omega$ and $t\ge 1$ the equality 
\begin{equation*}
    U^{(1,2)}_{\theta_{t-1}\omega}(1) \circ U^{(1)}_{\omega}(t-1) = U^{(2)}_{\theta_{1}\omega}(t-1) \circ U^{(1,2)}_{\omega}(1) 
\end{equation*}
holds.  
\end{lemma}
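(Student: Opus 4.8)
The plan is to exploit the injectivity of $i$ from~\eqref{a:emdeding}. Both sides of the claimed identity are bounded operators from $X_1$ into $X_2$: the left-hand side via $U^{(1)}_{\omega}(t-1)\colon X_1\to X_1$ followed by $U^{(1,2)}_{\theta_{t-1}\omega}(1)\colon X_1\to X_2$, and the right-hand side via $U^{(1,2)}_{\omega}(1)\colon X_1\to X_2$ followed by $U^{(2)}_{\theta_{1}\omega}(t-1)\colon X_2\to X_2$. Since $i$ is injective, two such operators coincide if and only if they coincide after post\nobreakdash-composition with $i$. Thus I would reduce the problem to verifying that $i\circ(\mathrm{LHS}) = i\circ(\mathrm{RHS})$ as maps $X_1\to X_1$, which allows me to work entirely inside the single cocycle $\Phi^{(1)}$, where the genuine semigroup law~\eqref{eq-cocycle} is available.

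For the left-hand side, I would first apply the factorization $i\circ U^{(1,2)}_{\theta_{t-1}\omega}(1) = U^{(1)}_{\theta_{t-1}\omega}(1)$ from~\eqref{a:connector} (evaluated at the point $\theta_{t-1}\omega$), obtaining $i\circ(\mathrm{LHS}) = U^{(1)}_{\theta_{t-1}\omega}(1)\circ U^{(1)}_{\omega}(t-1)$, and then collapse this with the cocycle identity~\eqref{eq-cocycle} (with $s=t-1$) to $U^{(1)}_{\omega}(t)$. For the right-hand side, I would instead begin by pushing $i$ through the $X_2$\nobreakdash-operator using the intertwining relation~\eqref{a:commutation} (at $\theta_{1}\omega$, for time $t-1$), giving $i\circ U^{(2)}_{\theta_{1}\omega}(t-1) = U^{(1)}_{\theta_{1}\omega}(t-1)\circ i$, then apply~\eqref{a:connector} at $\omega$ to turn $i\circ U^{(1,2)}_{\omega}(1)$ into $U^{(1)}_{\omega}(1)$, and finally use the cocycle identity (with $s=1$) to collapse $U^{(1)}_{\theta_{1}\omega}(t-1)\circ U^{(1)}_{\omega}(1)$ once more to $U^{(1)}_{\omega}(t)$. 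Both post\nobreakdash-compositions therefore equal $U^{(1)}_{\omega}(t)$, and injectivity of $i$ closes the argument.

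The computation is short; the only point requiring care---and what I regard as the conceptual crux---is that $U^{(1,2)}_{\omega}$ is \emph{not} itself a cocycle, so no semigroup law may be applied to it directly. The identity must instead be read as a transport relation that becomes the genuine cocycle law of $\Phi^{(1)}$ only after both of its ``half\nobreakdash-step'' connectors are resolved through $i$ via~\eqref{a:connector} and the intertwining~\eqref{a:commutation}. Keeping track of the correct base points ($\omega$, $\theta_{1}\omega$, $\theta_{t-1}\omega$) in each application of these relations is the sole bookkeeping hazard. Finally, the hypothesis $t\ge 1$ is precisely what guarantees $t-1\ge 0$, so that $U^{(1)}_{\omega}(t-1)$ and $U^{(2)}_{\theta_{1}\omega}(t-1)$ are well defined as forward\nobreakdash-time operators of their respective semiflows.
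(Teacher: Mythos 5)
Your proposal is correct and follows essentially the same route as the paper: post-compose both sides with the injective map $i$, resolve the connectors via~\eqref{a:connector}, intertwine via~\eqref{a:commutation}, and collapse everything to $U^{(1)}_{\omega}(t)$ with the cocycle identity~\eqref{eq-cocycle}. The only cosmetic difference is that you compute the two sides separately and meet at $U^{(1)}_{\omega}(t)$, whereas the paper writes a single chain of equalities from one side to the other.
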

\begin{proof}
   Fix $\omega$ and $t$. We are going to use the assumptions~\eqref{a:commutation} and \eqref{a:connector}. Moreover, since $i$ is injective it suffices to observe that  
\begin{align*}
        i \circ  U^{(1,2)}_{\theta_{t-1}\omega}(1) \circ U^{(1)}_{\omega}(t-1) &= U^{(1)}_{\theta_{t-1}\omega}(1) \circ U^{(1)}_{\omega}(t-1) \\
        & = U^{(1)}_{\theta_1\omega}(t-1) \circ U^{(1)}_{\omega}(1) \\  
        &=  U^{(1)}_{\theta_1\omega}(t-1) \circ i\circ U^{(1,2)}_{\omega}(1) \\
        &= i\circ U^{(2)}_{\theta_{1}\omega}(t-1) \circ U^{(1,2)}_{\omega}(1).  
\end{align*}
\end{proof}

The above observation allows us to extend the definition of the operator $U^{(1,2)}_{\omega}(1)$ to all $t\ge 1$ as 
\begin{equation*}
   U^{(1,2)}_{\omega}(t)  \vcentcolon =  U^{(1,2)}_{\theta_{t-1}\omega}(1) \circ U^{(1)}_{\omega}(t-1) = U^{(2)}_{\theta_{1}\omega}(t-1) \circ U^{(1,2)}_{\omega}(1)\in\mathcal{L}(X_2,X_1). 
\end{equation*}

Before we investigate how to transfer Oseledets decomposition from one measurable linear skew-product semidynamical systems $\Phi^{(1)}$ onto another semiflow $\Phi^{(2)}$, we will introduce a useful Claim. 

\begin{claim}\label{claim:E_2-subspace}
Assume~\eqref{a:emdeding}, \eqref{a:commutation}, \eqref{a:connector}  and let $\Omega_0\in \mathfrak{F}$ be such that $\PP(\Omega_0)=1$.  For any family of subspaces $\{W(\omega)\}_{\omega\in \Omega_0}$ of $X_1$ such that the equality $U^{(1)}_{\omega}(t)W(\omega)=W(\theta_{t}\omega)$ holds for all $t\ge 0$ and all $\omega\in\Omega_0$ there exists a family of subspaces $\{V(\omega)\}_{\omega\in \Omega_0}$ of $X_2$ such 
\begin{enumerate}
[label=\textup{(}\roman*\textup{)},ref=\textit{\roman*}]
\item\label{claim:sub-space-i} $i V(\omega) =W(\omega)$ for any $\omega\in\Omega_0$,
\item\label{claim:sub-space-ii} $U^{(2)}_{\omega}(t)V(\omega)=V(\theta_{t}\omega)$ for all $t\ge 0$ and $\omega\in\Omega_0$.
\end{enumerate}
\end{claim}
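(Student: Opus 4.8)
The plan is to take the pullback of the given family along the embedding, setting
$V(\omega) \vcentcolon= i^{-1}(W(\omega)) = \{\, v \in X_2 : iv \in W(\omega)\,\}$, which is automatically a linear subspace of $X_2$ as the preimage of a subspace under the linear map $i$. With this candidate in place, the whole Claim reduces to two verifications: that $iV(\omega) = W(\omega)$, i.e.\ property~\eqref{claim:sub-space-i}, and that the resulting family is $\Phi^{(2)}$\nobreakdash-invariant, i.e.\ property~\eqref{claim:sub-space-ii}.

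First I would establish the inclusion $W(\omega) \subseteq \Image(i)$ for every $\omega \in \Omega_0$; this is the crux of the matter. Assumption~\eqref{a:connector} gives $U^{(1)}_{\omega}(1) = i \circ U^{(1,2)}_{\omega}(1)$, so $\Image(U^{(1)}_{\omega}(1)) \subseteq \Image(i)$ for every $\omega$. Combined with the $\Phi^{(1)}$\nobreakdash-invariance of $\{W(\omega)\}$ at time one, this yields $W(\theta_1\omega) = U^{(1)}_{\omega}(1)W(\omega) \subseteq \Image(i)$ for every $\omega \in \Omega_0$. To pass from $\theta_1\omega$ to an arbitrary point of $\Omega_0$, I would use that $(\theta_t)$ is a flow: each $\omega \in \Omega_0$ equals $\theta_1(\theta_{-1}\omega)$ with $\theta_{-1}\omega \in \Omega_0$ by invariance of $\Omega_0$, whence $W(\omega) \subseteq \Image(i)$. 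Property~\eqref{claim:sub-space-i} is then immediate from the elementary set identity $i\,i^{-1}(W(\omega)) = W(\omega) \cap \Image(i) = W(\omega)$.

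Next I would verify the invariance~\eqref{claim:sub-space-ii} by a direct preimage computation resting only on the intertwining relation~\eqref{a:commutation} and the injectivity of $i$ from~\eqref{a:emdeding}. The inclusion $U^{(2)}_{\omega}(t)V(\omega) \subseteq V(\theta_t\omega)$ is the easy direction and needs no extra input: for $v \in V(\omega)$, relation~\eqref{a:commutation} gives $i\bigl(U^{(2)}_{\omega}(t)v\bigr) = U^{(1)}_{\omega}(t)(iv) \in U^{(1)}_{\omega}(t)W(\omega) = W(\theta_t\omega)$, so $U^{(2)}_{\omega}(t)v \in i^{-1}(W(\theta_t\omega)) = V(\theta_t\omega)$. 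For the reverse inclusion I would take $v' \in V(\theta_t\omega)$, write $iv' \in W(\theta_t\omega) = U^{(1)}_{\omega}(t)W(\omega)$ as $iv' = U^{(1)}_{\omega}(t)w$ with $w \in W(\omega)$, invoke the already-proven inclusion $W(\omega) \subseteq \Image(i)$ to represent $w = iu$ with $u \in V(\omega)$, and then combine~\eqref{a:commutation} with injectivity of $i$ to conclude $v' = U^{(2)}_{\omega}(t)u \in U^{(2)}_{\omega}(t)V(\omega)$.

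The main obstacle is precisely the inclusion $W(\omega) \subseteq \Image(i)$ for \emph{every} $\omega \in \Omega_0$, rather than merely for $\PP$\nobreakdash-a.e.\ $\omega$ or for $\omega$ in the forward image $\theta_1(\Omega_0)$. The one-step smoothing coming from~\eqref{a:connector} only places $W(\theta_1\omega)$ inside $\Image(i)$, so the argument genuinely requires backward-time reachability of each point of $\Omega_0$, that is, the full (bi\nobreakdash-)invariance of $\Omega_0$ under the flow $(\theta_t)$; this is why I invoke $\theta_{-1}\omega \in \Omega_0$ above. Everything past that inclusion is routine set-theoretic bookkeeping driven by the injectivity of $i$ and the intertwining identity.
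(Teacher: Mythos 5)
Your proof is correct, but it is not the route the displayed proof takes: the paper defines $V(\omega)\vcentcolon=U^{(1,2)}_{\theta_{-1}\omega}(1)W(\theta_{-1}\omega)$ (a pushforward through the connecting operator) and verifies \eqref{claim:sub-space-i} and \eqref{claim:sub-space-ii} by composing with $i$ and cancelling by injectivity, whereas you take the pullback $V(\omega)=i^{-1}(W(\omega))$. This is exactly the alternative the author sketches in the remark immediately following the Claim's proof, so your argument is a fleshed-out version of that remark rather than of the proof itself. The two definitions in fact coincide: both families have image $W(\omega)$ under the injective map $i$ (for the pushforward this is \eqref{claim:sub-space-i}; for the pullback it is $i\,i^{-1}(W(\omega))=W(\omega)\cap\Image(i)=W(\omega)$ once $W(\omega)\subseteq\Image(i)$ is known), and injectivity of $i$ forces two subsets with the same image to be equal. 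What each approach buys: the pushforward definition is the one the paper actually needs later, because in Theorem~\ref{thm:main} the measurability of $\{U^{(1,2)}_{\theta_{-1}\omega}(1)E_j(\theta_{-1}\omega)\}$ is read off from the strong measurability of $U^{(1,2)}_{\omega}(1)$ acting on a measurable basis; your pullback definition makes \eqref{claim:sub-space-i} and \eqref{claim:sub-space-ii} almost tautological but pushes all the work into the single inclusion $W(\omega)\subseteq\Image(i)$. A point in your favor is that you make explicit the hypothesis that both arguments really use, namely $\theta_{-1}\omega\in\Omega_0$ (backward reachability, i.e.\ invariance of $\Omega_0$), which the Claim's statement does not literally assert ($\Omega_0$ is only assumed to be of full measure) and which the paper's proof invokes silently when it writes $W(\theta_{-1}\omega)$.
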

\begin{proof}
For $\omega\in\Omega_0$ let $V(\omega) \vcentcolon=U^{(1,2)}_{\theta_{-1}\omega} (1) W(\theta_{-1}\omega)$. Therefore,
\begin{align*}
     iV(\omega) =i\circ U^{(1,2)}_{\theta_{-1}\omega}(1)W(\theta_{-1}\omega)&=   U^{(1)}_{\theta_{-1}\omega}(1) W(\theta_{-1}\omega) = W( \omega),
\end{align*}
and the part~\eqref{claim:sub-space-i} is done. Furthermore, for fixed $t\ge 0$ we have 
\begin{align*}
   i \circ U^{(2)}_{\omega}(t)V(\omega) &= i \circ U^{(2)}_{\omega}(t)\circ  U^{(1,2)}_{\theta_{-1}\omega}(1) W(\theta_{-1}\omega)\\
     & =  U^{(1)}_{\omega}(t) \circ i \circ  U^{(1,2)}_{\theta_{-1}\omega}(1) W(\theta_{-1}\omega) \\
     &=  U^{(1)}_{\omega}(t) \circ  U^{(1)}_{\theta_{-1}\omega}(1)W(\theta_{-1}\omega)\\
     &=  U^{(1)}_{\theta_{t-1}\omega}(1) \circ  U^{(1)}_{\theta_{-1}\omega}(t) W(\theta_{-1}\omega)\\
     &=  U^{(1)}_{\theta_{t-1}\omega}(1)  W(\theta_{t-1}\omega)\\
     &=i\circ U^{(1,2)}_{\theta_{t-1}\omega}(1)  W(\theta_{t-1}\omega)\\
     &= i\circ U^{(1,2)}_{\theta_{-1} \theta_{t}\omega}(1)  W(\theta_{-1} \theta_{t}\omega) \\
     &=i V(\theta_t\omega), 
\end{align*}
which concludes the proof. 
\end{proof}
It should be noted that $V(\omega)$ can be defined as $i^{-1}(W(\omega))$. In such a case, firstly, we can observe that by invariance of $W(\omega)$ and the assumption~\eqref{a:connector} we have  
\begin{equation*}
    W(\theta_t \omega) = U^{(1)}_{\theta_{t-1}\omega}(1)  W(\theta_{t-1}\omega) = i \circ U^{(1,2)}_{\omega}(1) W(\theta_{t-1}\omega)\subset iX_2.  
\end{equation*}   
Hence, by proceeding as in the proof of Claim~\ref{claim:E_2-subspace} we can show invariance of $V(\omega)$. Indeed,  
\begin{align*}
   i \circ U^{(2)}_{\omega}(t)V(\omega) &= i \circ U^{(2)}_{\omega}(t)i^{-1}(W(\omega))\\
     & =  U^{(1)}_{\omega}(t) \circ i (i^{-1}(W(\omega))) \\
     &=   U^{(1)}_{\omega}(t) (W(\omega)  \cap iX_2) \\ 
     & = U^{(1)}_{\omega}(t) W(\omega)  \\
     & =  W(\theta_t \omega) \\
     & =  W(\theta_t \omega) \cap i X_2  \\
     & = i(i^{-1} W(\theta_t \omega)). 
\end{align*}

Assume that $\Phi^{(1)}$ admits an Oseledets decomposition.  By taking in Claim~\ref{claim:E_2-subspace} $\{E^{(1)}_j(\omega)\}_{\omega\in\Omega_0,j=1,\dots,k}$ or $\{E^{(1)}_j(\omega)\}_{\omega\in\Omega_0,j\in \NN}$ for $\{W(\omega)\}$ we obtain families \linebreak $\{E^{(2)}_j(\omega)\}_{\omega\in\Omega_0,j=1,\dots,k}$ or $\{E^{(2)}_j(\omega)\}_{\omega\in\Omega_0,j\in \NN}$ of invariant, finite-dimensional vector subspaces of $X_2$.  

Fix $j$.  Let $l$ be the dimension of $E^{(1)}_j$.  For $\omega \in \Omega_0$ denote by $G(\omega)$ the linear isomorphism from $E^{(1)}_j(\omega)$ onto $\RR^{l}$ given by $G(\omega)u \vcentcolon= (\alpha_1, \ldots, \alpha_l)$ where $u = \alpha_1 \,v_1 (\omega)+ \ldots + \alpha_l\, v_l(\omega)$ is written in the basis.  The family $\{ \widehat{U}_{\omega}(1) \}_{\omega \in \Omega_0}$ of linear automorphisms of $\RR^l$ defined by
    \begin{equation*}
        \widehat{U}_{\omega}(1) \vcentcolon= G(\theta_{1}\omega) \circ U^{(1)}_{\omega}(1){\restriction}_{E^{(1)}_j(\omega)} \circ G(\omega)^{-1}, \quad \omega \in \Omega_0,
    \end{equation*}
    generates a two\nobreakdash-\hspace{0pt}sided discrete\nobreakdash-\hspace{0pt}time linear skew\nobreakdash-\hspace{0pt}product dynamical system $\widehat{\Phi} = ((\widehat{U}_{\omega}(n)), (\theta_n))$ on $\Omega_0 \times \RR^l$, with
    \begin{equation*}
        \widehat{U}_{\omega}(n) \vcentcolon= G(\theta_{n}\omega) \circ U^{(1)}_{\omega}(n){\restriction}_{E^{(1)}_j(\omega)} \circ G(\omega)^{-1}, \quad \omega \in \Omega_0, \ n \in \NN.
    \end{equation*}
    This allows us to use results in~\cite[Subsection~4.2.4]{Vi}.  Indeed, those results are formulated for discrete time, but assumption \eqref{a:Lim} allows us to extend them to the continuous time case.  As a consequence of~\cite[Proposition~4.11]{Vi}, 

\begin{equation}
\label{eq:Lap_on_E}
         \lim\limits_{t\to\infty} \frac{1}{t}\ln{\norm{U^{(1)}_{\omega}(t){\restriction}_{E^{(1)}_{j}(\omega) }}_{1, 1}}  = \lambda_j \text{ and } \lim\limits_{t\to\infty} \frac{1}{t}\ln{\norm{(U^{(1)}_{\omega}(t){\restriction}_ {E^{(1)}_{j}(\omega)})^{-1}  }^{-1}_{1, 1}}  = \lambda_j.
\end{equation} 
Based on the above, we can introduce the following lemma, which will prove to be useful in the subsequent analysis. 
\begin{lemma}
\label{lemma:Lap-Projection-estim}
    For $\PP$-a.e. $\omega\in\Omega$ and each $j$, there exists a function $c:(1,\infty)\to (0,\infty)$ such 
\begin{enumerate}
[label=\textup{(}\roman*\textup{)},ref=\textit{\roman*}]
\item\label{lemma:Lap-Projection-estim-i} for each $t\ge 1$ and $u\in E^{(2)}_j(\theta_t\omega)$ we have $c(t)\norm{u}_2\le \norm{iu}_1$, 
\item\label{lemma:Lap-Projection-estim-ii} $c$ is sub-exponential i.e. 
\begin{equation*}
    \lim_{t\to \infty} \frac{\ln c(t)}{t} =0.
\end{equation*}
\end{enumerate}
\end{lemma}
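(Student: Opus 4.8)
The plan is to reconstruct $u$ from $iu$ by stepping one unit of time backwards inside $E^{(1)}_{j}$ and then returning to $X_{2}$ through the connecting operator $U^{(1,2)}(1)$; taking exactly one step is what keeps the connector norm under the uniform bound $M$ of \eqref{a:connector}. Fix $j$ and take $\omega$ in the full-measure set on which $\omega\in\Omega_{0}$, both limits in \eqref{eq:Lap_on_E} hold, and the single-step tempering below is valid. Fix $t\ge 1$ and write $\omega'=\theta_{t}\omega$, so that $\theta_{-1}\omega'=\theta_{t-1}\omega$. Since $i$ is injective by \eqref{a:emdeding} and $iE^{(2)}_{j}=E^{(1)}_{j}$ by Claim~\ref{claim:E_2-subspace}, the restriction $i\colon E^{(2)}_{j}(\cdot)\to E^{(1)}_{j}(\cdot)$ is a linear bijection; moreover $U^{(1)}_{\theta_{t-1}\omega}(1)$ restricts to an isomorphism of the finite-dimensional space $E^{(1)}_{j}(\theta_{t-1}\omega)$ onto $E^{(1)}_{j}(\theta_{t}\omega)$ by invariance. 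Given $u\in E^{(2)}_{j}(\omega')$, set $w\vcentcolon=(U^{(1)}_{\theta_{t-1}\omega}(1){\restriction}_{E^{(1)}_{j}})^{-1}(iu)\in E^{(1)}_{j}(\theta_{t-1}\omega)$ and let $v\in E^{(2)}_{j}(\theta_{t-1}\omega)$ be the unique vector with $iv=w$. Using \eqref{a:commutation} and injectivity one checks $U^{(2)}_{\theta_{t-1}\omega}(1)v=u$, and then \eqref{a:connector} gives $U^{(1,2)}_{\theta_{t-1}\omega}(1)w=U^{(1,2)}_{\theta_{t-1}\omega}(1)(iv)=U^{(2)}_{\theta_{t-1}\omega}(1)v=u$. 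Hence
\begin{equation*}
  u = U^{(1,2)}_{\theta_{t-1}\omega}(1)\circ\bigl(U^{(1)}_{\theta_{t-1}\omega}(1){\restriction}_{E^{(1)}_{j}}\bigr)^{-1}(iu).
\end{equation*}

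Taking $\norm{\cdot}_{2}$ and inserting the uniform bound $\norm{U^{(1,2)}_{\theta_{t-1}\omega}(1)}_{1,2}\le M$ from \eqref{a:connector}, I obtain $\norm{u}_{2}\le M\,\norm{R(t)}_{1,1}\,\norm{iu}_{1}$, where $R(t)\vcentcolon=(U^{(1)}_{\theta_{t-1}\omega}(1){\restriction}_{E^{(1)}_{j}})^{-1}$. I therefore set
\begin{equation*}
  c(t)\vcentcolon=\frac{1}{M\,\norm{R(t)}_{1,1}}>0,
\end{equation*}
which gives \eqref{lemma:Lap-Projection-estim-i}. Since $\ln c(t)=-\ln M-\ln\norm{R(t)}_{1,1}$, property \eqref{lemma:Lap-Projection-estim-ii} reduces to $\tfrac1t\ln\norm{R(t)}_{1,1}\to 0$.

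For the $\limsup$ I factor $R(t)$ through the cumulative cocycle. Writing $A_{s}\vcentcolon=U^{(1)}_{\omega}(s){\restriction}_{E^{(1)}_{j}(\omega)}$, the cocycle property gives $U^{(1)}_{\theta_{t-1}\omega}(1){\restriction}_{E^{(1)}_{j}}=A_{t}A_{t-1}^{-1}$, whence $R(t)=A_{t-1}A_{t}^{-1}$ and $\norm{R(t)}_{1,1}\le\norm{A_{t-1}}_{1,1}\norm{A_{t}^{-1}}_{1,1}$. The first limit in \eqref{eq:Lap_on_E} yields $\tfrac1t\ln\norm{A_{t-1}}_{1,1}\to\lambda_{j}$ and the second (a reciprocal norm) yields $\tfrac1t\ln\norm{A_{t}^{-1}}_{1,1}\to-\lambda_{j}$, so $\limsup_{t}\tfrac1t\ln\norm{R(t)}_{1,1}\le\lambda_{j}-\lambda_{j}=0$. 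For the $\liminf$, from $\Id=\bigl(U^{(1)}_{\theta_{t-1}\omega}(1){\restriction}_{E^{(1)}_{j}}\bigr)R(t)$ I get $1\le\norm{U^{(1)}_{\theta_{t-1}\omega}(1)}_{1,1}\norm{R(t)}_{1,1}$, hence $\tfrac1t\ln\norm{R(t)}_{1,1}\ge-\tfrac1t\lnplus\norm{U^{(1)}_{\theta_{t-1}\omega}(1)}_{1,1}$.

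The main obstacle is the single-step tempering statement
\begin{equation*}
  \lim_{t\to\infty}\frac1t\,\lnplus\norm{U^{(1)}_{\theta_{t-1}\omega}(1)}_{1,1}=0\qquad\PP\text{-a.e.},
\end{equation*}
which closes the $\liminf$ and is where \eqref{a:Lim} is used in full. For integer shifts it follows from the Birkhoff ergodic theorem applied to the $L_{1}$ functions in \eqref{a:Lim}: if $g\ge 0$ lies in $L_{1}\OFP$ then $\tfrac1n g(\theta_{n}\omega)\to 0$ a.e., since $\tfrac1n g(\theta_{n}\omega)$ is the difference of two Ces\`aro averages both converging to $\int_{\Omega}g\,\dd\PP$. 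To pass to real $t$, write $t-1=n+r$ with $n\in\NN$, $r\in[0,1)$, and split $U^{(1)}_{\theta_{n+r}\omega}(1)=U^{(1)}_{\theta_{n+1}\omega}(r)\circ U^{(1)}_{\theta_{n+r}\omega}(1-r)$; the first factor is dominated by the first function of \eqref{a:Lim} evaluated at $\theta_{n+1}\omega$ and the second by the second function of \eqref{a:Lim} evaluated at $\theta_{n}\omega$, so both contributions are sub-exponential in $n$ (equivalently in $t$). This gives the claim, and combined with the $\limsup$ estimate it shows $\tfrac1t\ln\norm{R(t)}_{1,1}\to 0$, hence $\tfrac1t\ln c(t)\to 0$. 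Intersecting the at most countably many full-measure sets (one for each $j$) completes the proof.
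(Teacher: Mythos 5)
Your proof is correct, and its core mechanism is the same as the paper's: pull $iu$ back through the cocycle restricted to $E^{(1)}_j$, return to $X_2$ via the one\nobreakdash-step connector $U^{(1,2)}_{\theta_{t-1}\omega}(1)$ with its uniform bound $M$ from \eqref{a:connector}, and control the resulting constant by the two limits in \eqref{eq:Lap_on_E}. Indeed, after submultiplicativity your lower bound $c(t)\ge \norm{(U^{(1)}_{\omega}(t){\restriction}_{E^{(1)}_j(\omega)})^{-1}}_{1,1}^{-1}\big/\bigl(M\,\norm{U^{(1)}_{\omega}(t-1){\restriction}_{E^{(1)}_j(\omega)}}_{1,1}\bigr)$ is literally the paper's key inequality. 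The one genuine divergence is your choice of $c(t)=1/(M\norm{R(t)}_{1,1})$ as an explicit, non\nobreakdash-optimal constant: this forces you to also prove $\liminf_t \tfrac1t\ln\norm{R(t)}_{1,1}\ge 0$, for which you invoke a one\nobreakdash-step tempering statement via Birkhoff's theorem and \eqref{a:Lim}. The paper instead defines $c(t)$ as the infimum of $\norm{iu'}_1/\norm{u'}_2$ over $E^{(2)}_j(\theta_t\omega)\setminus\{0\}$, so the upper bound $c(t)\le\norm{i}_{2,1}$ is free and no Birkhoff argument is needed. Your extra step is valid (and the Birkhoff/splitting argument for real $t$ is carried out correctly), but you could have avoided it even with your $c(t)$ by noting $\norm{R(t)}_{1,1}\ge \norm{A_t^{-1}}_{1,1}/\norm{A_{t-1}^{-1}}_{1,1}$ and applying the second limit in \eqref{eq:Lap_on_E} twice; alternatively, switching to the infimal constant recovers the paper's shorter route.
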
 
\begin{proof} Let, 
\begin{equation*}
  c(t)=\inf \Big\{\frac{\norm{iu'}_1 }{\norm{u'}_2} : u'\in E^{(2)}_j(\theta_t\omega)\setminus\{0\} \Big\} 
\end{equation*}
Hence, part~\eqref{lemma:Lap-Projection-estim-i} is trivially satisfied.  
  For part~\eqref{lemma:Lap-Projection-estim-ii} observe that the upper bound for $c$ is obviously $\norm{i}$. Therefore, we are concentrating on the sub-exponential lower limit. It is claimed that  
    \begin{equation*}
        \frac{ \norm{(U^{(1)}_{\omega}(t){\restriction}_{E^{(1)}_{j}(\omega)})^{-1}  }^{-1}_{1, 1}} {\norm{U^{(1,2)}_{\theta_{t-1}\omega}(1)}_{1, 2} \, \norm{U^{(1)}_{\omega}(t-1){\restriction}_{E^{(1)}_{j}(\omega)} }_{1, 1} }\le c(t), \quad \text{for } t\ge 1.
    \end{equation*}
    Indeed, fix $t\ge 1$ and $u'\in E_j^{(2)}(\theta_t\omega)\setminus\{0\}$. Let $u\in E_j^{(1)}(\omega)$ be such that $u'=U^{(1,2)}_{\omega}(t)u$. Since 
    \begin{align*}
        & \norm{u'}_2 = \norm{U^{(1,2)}_{\theta_{t-1}\omega}(1) U^{(1)}_{\omega}(t-1)u }_2 \le \norm{U^{(1,2)}_{\theta_{t-1}\omega}(1)}_{1, 2} \, \norm{U^{(1)}_{\omega}(t-1){\restriction}_{E^{(1)}_{j}(\omega)} }_{1, 1} \, \norm{u}_1,  \\
        & \norm{iu'}_1= \norm{ U^{(1)}_{\omega}(t) u }_{1, 1} \ge \norm{(U^{(1)}_{\omega}(t){\restriction}_{E^{(1)}_{j}(\omega)})^{-1}  }_{1, 1}^{-1} \, \norm{u}_1.
    \end{align*}
We thus have further
\begin{equation*}
    \frac{ \norm{(U^{(1)}_{\omega}(t){\restriction}_{E^{(1)}_{j}(\omega)})^{-1}  }^{-1}_{1, 1}} {\norm{U^{(1,2)}_{\theta_{t-1}\omega}(1)}_{1, 2} \, \norm{U^{(1)}_{\omega}(t-1){\restriction}_{E^{(1)}_{j}(\omega)} }_{1, 1} } \le \frac{\norm{iu'}_1}{\norm{u'}_2}.
\end{equation*}
From this the preliminary claim is obtained, which makes it possible to write the following
\begin{gather*}
    \frac{ 1 }{t} \ln \norm{(U^{(1)}_{\omega}(t){\restriction}_{E^{(1)}_{j}(\omega)})^{-1}  }^{-1}_{1,1} - \frac{ 1}{t }  \ln \norm{U^{(1)}_{\omega}(t-1){\restriction}_{E^{(1)}_{j}(\omega)} }_{1,1} - \frac{1}{t} \ln \norm{U^{(1,2)}_{\theta_{t-1}\omega}(1)}_{1,2}   \\ 
    \le \frac{\ln c(t)}{t} \le \frac{\ln \norm{i}_{2, 1}}{t}
\end{gather*}
Equation~\eqref{eq:Lap_on_E} concludes the proof.
\end{proof} 

\begin{theorem}
\label{thm:main}
    Assume~\eqref{a:emdeding}-\eqref{a:sigma-compatibility} and moreover, that $\Phi^{(1)}$ admits an Oseledets decomposition in case~\eqref{Osel:case_I} \textup{(}or~\eqref{Osel:case_II}\textup{)}. Then $\Phi^{(2)}$ admits an Oseledets decomposition in case~\eqref{Osel:case_I} \textup{(}or~\eqref{Osel:case_II} respectively\textup{)}.  
\end{theorem}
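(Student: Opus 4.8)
The plan is to transfer the entire Oseledets structure from $\Phi^{(1)}$ to $\Phi^{(2)}$ by pulling everything back through the embedding $i$, keeping the same Lyapunov exponents $\lambda_j$ and the same index set, and setting
\[
E^{(2)}_j(\omega):=i^{-1}\bigl(E^{(1)}_j(\omega)\bigr),\qquad F^{(2)}_j(\omega):=i^{-1}\bigl(F^{(1)}_j(\omega)\bigr),\qquad F^{(2)}_\infty(\omega):=i^{-1}\bigl(F^{(1)}_\infty(\omega)\bigr).
\]
The first structural observation is that every finite-dimensional invariant space of $\Phi^{(1)}$ already lies in $iX_2$: by the regularisation identity of \eqref{a:connector}, $E^{(1)}_j(\theta_t\omega)=U^{(1)}_{\theta_{t-1}\omega}(1)E^{(1)}_j(\theta_{t-1}\omega)=i\circ U^{(1,2)}_{\theta_{t-1}\omega}(1)E^{(1)}_j(\theta_{t-1}\omega)\subseteq iX_2$, so $iE^{(2)}_j(\omega)=E^{(1)}_j(\omega)$ and $\dim E^{(2)}_j(\omega)=\dim E^{(1)}_j(\omega)$. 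Using this together with injectivity of $i$, I would verify the purely algebraic conclusions: the direct-sum decomposition of $X_2$ (for $u\in X_2$ the $E$-components of $iu$ all lie in $iX_2$, hence so does the remainder), closedness and finite codimension of the $F^{(2)}$'s (preimages of closed finite-codimension spaces under the bounded map $i$), and invariance, which follows from \eqref{a:commutation} via $iU^{(2)}_\omega(t)=U^{(1)}_\omega(t)i$. Claim~\ref{claim:E_2-subspace} is precisely the invariance statement for the finite-dimensional pieces.

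For measurability I would exhibit a measurable basis of $E^{(2)}_j(\omega)$ as $\bigl\{U^{(1,2)}_{\theta_{-1}\omega}(1)v_m(\theta_{-1}\omega)\bigr\}$, where $v_m$ is a measurable basis of $E^{(1)}_j$; joint measurability of the action follows from Lemma~\ref{lemma:strong_measurable_paring} and \eqref{a:U_measurable}, and linear independence from injectivity of $i$ on $E^{(1)}_j$. For the projections the key identity is $i\,P^{(2)}_j(\omega)=P^{(1)}_j(\omega)\,i$, valid because for $u\in X_2$ every $X_1$-component of $iu$ lies in $iX_2$. Strong measurability of $P^{(1)}_j$ then gives measurability of $\omega\mapsto i\,P^{(2)}_j(\omega)u$ into $X_1$, and \eqref{a:sigma-compatibility} is exactly the hypothesis that lets me pull this back to measurability into $X_2$. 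Temperedness I would deduce from the estimate
\[
\norm{P^{(2)}_j(\omega)}_{2,2}\le 1+\norm{i}\,d_j(\omega)\,\bigl\lVert \mathrm{Id}-P^{(1)}_j(\omega)\bigr\rVert_{1,1},
\]
where $d_j(\omega)=\sup\bigl\{\norm{w}_2/\norm{iw}_1:w\in\bigoplus_{n\le j}E^{(2)}_n(\omega)\setminus\{0\}\bigr\}$; applying Lemma~\ref{lemma:Lap-Projection-estim} to the finite-dimensional invariant space $\bigoplus_{n\le j}E^{(1)}_n$ shows $d_j$ is subexponential along orbits, and temperedness of $P^{(1)}_j$ then makes the product tempered.

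The exponent statements on the $E^{(2)}_j$ I would obtain by sandwiching: for nonzero $u\in E^{(2)}_j(\omega)$ one has $iU^{(2)}_\omega(t)u=U^{(1)}_\omega(t)iu$, whence $\tfrac1{\norm{i}}\norm{U^{(1)}_\omega(t)iu}_1\le\norm{U^{(2)}_\omega(t)u}_2\le \tfrac1{c(t)}\norm{U^{(1)}_\omega(t)iu}_1$ by Lemma~\ref{lemma:Lap-Projection-estim}, and \eqref{eq:Lap_on_E} forces the limit $\lambda_j$. The decay statements on the large spaces I would handle via the factorisation $U^{(2)}_\omega(t)=U^{(1,2)}_{\theta_{t-1}\omega}(1)\circ U^{(1)}_\omega(t-1)\circ i$ together with the uniform bound $M$ of \eqref{a:connector}: for $u\in F^{(2)}_\infty(\omega)$ one gets $\norm{U^{(2)}_\omega(t)u}_2\le M\norm{i}\,\bigl\lVert U^{(1)}_\omega(t-1){\restriction}_{F^{(1)}_\infty(\omega)}\bigr\rVert_{1,1}\norm{u}_2$, so the uniformly bounded regulariser cannot speed up decay and the limit is $-\infty$ (respectively $\le\lambda_{j+1}$ on $F^{(2)}_j$), while the matching lower bound $\ge\lambda_{j+1}$ comes from the already-established exponent on the embedded $E^{(2)}_{j+1}$.

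I expect the genuine obstacle to be the negative-semiorbit characterisation, \eqref{O1:7} and \eqref{O2:6}, specifically its converse. The forward implication is routine: given $u\in E^{(2)}_j(\omega)$ I take the backward orbit of $iu$ inside the invertible finite-dimensional space $E^{(1)}_j$, which has exponent $\lambda_j$ and stays in $iX_2$, and pull it back by $i^{-1}$, the subexponential two-sided distortion on $E_j$ preserving the exponent. The converse is delicate because off the finite-dimensional pieces the comparison of norms is only one-sided ($\norm{iv}_1\le\norm{i}\norm{v}_2$, with no reverse bound on the infinite-dimensional $F$'s): a backward $X_2$-orbit of exponent $\lambda_j$ through $u\in F^{(2)}_{j-1}(\omega)$ maps to a backward $X_1$-orbit through $iu\in F^{(1)}_{j-1}(\omega)$ whose exponent is seen directly only to be $\ge\lambda_j$. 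To invoke the characterisation for $\Phi^{(1)}$ I must exclude a nonzero $F^{(1)}_j$-component, which I would do by projecting the orbit with the invariant $P^{(1)}_j$ and using that on $F^{(1)}_j$ backward orbits cannot decay slower than the top exponent $\lambda_{j+1}<\lambda_j$ allows; alternatively, once the decomposition, exponents and temperedness are established for $\Phi^{(2)}$, the characterisation becomes a formal consequence of the filtration structure and may be reproven internally by the same argument used for $\Phi^{(1)}$.
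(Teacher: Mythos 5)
Your proposal follows essentially the same route as the paper: the same candidate spaces (the paper itself observes, right after Claim~\ref{claim:E_2-subspace}, that one may take $V(\omega)=i^{-1}(W(\omega))$ since the invariant finite-dimensional spaces already lie in $iX_2$, so your $i^{-1}(E_j^{(1)}(\omega))$ coincides with its $U^{(1,2)}_{\theta_{-1}\omega}(1)E_j(\theta_{-1}\omega)$), the same measurable basis obtained from \eqref{a:U_measurable} and Lemma~\ref{lemma:strong_measurable_paring}, the identity $i\,P^{(2)}(\omega)=P^{(1)}(\omega)\,i$ combined with \eqref{a:sigma-compatibility} for strong measurability, Lemma~\ref{lemma:Lap-Projection-estim} for temperedness (your $d_j$ is the reciprocal of the paper's $c$), the same sandwich for the exponents on $E_j^{(2)}$, and the same factorisation $U^{(2)}_{\omega}(t)=U^{(1,2)}_{\theta_{t-1}\omega}(1)\circ U^{(1)}_{\omega}(t-1)\circ i$ for the decay on the tail spaces.

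The one place you diverge is \eqref{Osel:case_I}\eqref{O1:7}, and there the obstacle you flag is not actually present. You are right that the static comparison $\norm{iv}_1\le\norm{i}_{2,1}\norm{v}_2$ only yields $\liminf_{s\to-\infty}\ln\norm{i\tilde{u}^{(2)}(s)}_1/s\ge\lambda_j$; but the matching upper bound does not come from a reverse norm inequality on the infinite-dimensional spaces. It comes from the dynamics: the cocycle relation gives $\tilde{u}^{(2)}(s+1)=U^{(1,2)}_{\theta_{s}\omega}(1)\,i\,\tilde{u}^{(2)}(s)$, so by the uniform bound $M$ of \eqref{a:connector} one has $\norm{i\tilde{u}^{(2)}(s)}_1\ge M^{-1}\norm{\tilde{u}^{(2)}(s+1)}_2$, and dividing by $s<0$ yields $\limsup_{s\to-\infty}\ln\norm{i\tilde{u}^{(2)}(s)}_1/s\le\lambda_j$. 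Hence the pushed-forward negative semiorbit has exponent exactly $\lambda_j$, and \eqref{O1:7} for $\Phi^{(1)}$ applies verbatim to $iu^{(2)}\in F_{j-1}(\omega)$; no exclusion of an $F_j^{(1)}$-component and no internal re-derivation of the filtration characterisation is needed. Your proposed workaround (projecting with $P^{(1)}_j$ and arguing about backward decay rates on $F^{(1)}_j$) would amount to reproving part of the Oseledets characterisation from statements that are only given for forward time, which is considerably more delicate than the two-line estimate above; you should replace it. A similar remark applies to the converse direction: rather than inverting $i$ on $E^{(1)}_j$ and controlling the distortion backward in time (Lemma~\ref{lemma:Lap-Projection-estim} is stated only for $t\ge1$), the paper simply sets $\tilde{u}^{(2)}(s)=U^{(1,2)}_{\theta_{s-1}\omega}(1)\tilde{u}^{(1)}(s-1)$ and uses the same uniform bound.
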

\begin{proof}
    We are going to show that if $\Phi^{(1)}$ admits an Oseledets decomposition in case~\eqref{Osel:case_I} with parameters  
\begin{equation*}
     (\Omega_0, (\lambda_j)_{j=1,\dots,k}, \{ E_j(\omega)\}_{\omega\in\Omega_0,j=1,\dots,k}, \{F_{\infty}(\omega)\}_{\omega \in \Omega_0}) 
 \end{equation*}
    then $\Phi^{(2)}$ admits an Oseledets decomposition in case~\eqref{Osel:case_I} with parameters 
  \begin{equation*}
     (\Omega_0, (\lambda_j)_{j=1,\dots,k}, \{U^{(1,2)}_{\theta_{-1}\omega}(1) E_j(\theta_{-1}\omega)\}_{\omega\in\Omega_0,j=1,\dots,k}, \{i^{-1}(F_{\infty}(\omega))\}_{\omega \in \Omega_0}). 
 \end{equation*}
 Before we start with cases~\eqref{O1:1}-\eqref{O1:8}. Note that $i^{-1}(F_{\infty}(\omega))$ is a closed subspace as a continuous preimage of a closed subspace. Moreover, for each $j=1,\dots,k$ the family $\{U^{(1,2)}_{\theta_{-1}\omega}(1) E_j(\theta_{-1}\omega)\}_{\omega\in\Omega_0}$ is measurable. Indeed, the mapping 
 \begin{equation*}
   \big[ \, E_{j}(\omega) \ni u \mapsto    U^{(1,2)}_{\theta_{-1}\omega}(1)u  \in U^{(1,2)}_{\theta_{-1}\omega}(1) E_j(\theta_{-1}\omega)\, \big]
 \end{equation*}
 is a bijection. It follows from the observation that $U^{(1)}_{\omega}(1)$ is a bijection since it preserves finite dimension i.e., $\dim U^{(1)}_{\omega}(1) E_{j}(\omega) = \dim E_{j}(\theta_{1}\omega)$ (see~\eqref{Osel:case_I}\eqref{O1:1}). Therefore, from: the injectivity of $i$, the bijectivity of $U^{(1)}_{\omega}(1)$ and $i\circ U^{(1,2)}_{\omega}(1)=U^{(1)}_{\omega}(1)$ we can deduce that $u=v$ whenever, $i\circ U^{(1)}_{\theta_{-1}\omega}(1) u = i\circ U^{(1)}_{\theta_{-1}\omega}(1) v$. Moreover, we have 
\begin{align*}
     U^{(1,2)}_{\theta_{-1}\omega}(1) E_j(\theta_{-1}\omega) & = \spanned \{ U^{(1,2)}_{\theta_{-1}\omega}(1) v_1(\theta_{-1}\omega),\dots,U^{(1,2)}_{\theta_{-1}\omega}(1) v_l(\theta_{-1}\omega)  \}, 
 \end{align*}
where $v_1, \dots, v_l \colon \Omega_0 \to X$ are $(\mathfrak{F},
\mathfrak{B}(X))$\nobreakdash-\hspace{0pt}measurable functions, such that $ (v_1(\omega), \allowbreak \dots, v_l(\omega))$ forms a basis of $E_j(\omega)$ for each $\omega \in \Omega_0$. Hence, it remains to show the $(\mathfrak{F},
\mathfrak{B}(X_2))$\nobreakdash-\hspace{0pt}measurability of the maps  
\begin{equation*}
   \big[ \, \Omega_0 \ni \omega \mapsto U^{(1,2)}_{\theta_{-1}\omega}(1) v_j(\theta_{-1}\omega) \in X_2\, \big], 
\end{equation*} 
for $j=1,\dots,l$. It can be done by looking at the composition 
\begin{equation*}
    \omega\mapsto (\omega,\omega)\mapsto (U_{\theta_{-1}\omega}^{(1,2)}(1),  v_j(\theta_{-1}\omega )) \mapsto U_{\theta_{-1}\omega}^{(1,2)}(1) v_j(\theta_{-1}\omega). 
\end{equation*}
The second mapping is $(\mathfrak{F}\otimes \mathfrak{F}, \mathfrak{B}(\mathcal{L}_{\mathrm{s}}(X_1,X_2))\otimes\mathfrak{B}(X_1))$\nobreakdash-\hspace{0pt}measurable. By the assumption~\eqref{a:separable} and Lemma~\ref{lemma:strong_measurable_paring}, the pairing is $(\mathfrak{B}(\mathcal{L}_{\mathrm{s}}(X_1,X_2))\otimes\mathfrak{B}(X_1), \allowbreak\mathfrak{B}(X_2) )$\nobreakdash-\hspace{0pt}measurable. 

\smallskip\par\noindent
\eqref{Osel:case_I}\eqref{O1:1} can be done via Claim~\ref{claim:E_2-subspace}, indeed
\begin{equation*}
    U^{(2)}_{\omega}(t) U^{(1,2)}_{\theta_{-1}\omega}(1) E_j(\theta_{-1}\omega) = U^{(1,2)}_{\theta_{t-1}\omega}(1) E_j(\theta_{t-1}\omega). 
\end{equation*}
It remains to show $U^{(2)}_{\omega}(t)i^{-1}(F_{\infty}(\omega))\subset i^{-1}(F_{\infty}(\theta_{t}\omega))$, equivalently
\begin{align*} 
    i\circ U^{(2)}_{\omega}(t) ( i^{-1}(F_{\infty}(\omega)))
    &=U^{(1)}_{\omega}(t) \circ i (i^{-1}(F_{\infty}(\omega)))\\
    &=U^{(1)}_{\omega}(t) (F_{\infty}(\omega)\cap iX_2)\\
    &\subset U^{(1)}_{\omega}(t) F_{\infty}(\omega) \cap U^{(1)}_{\omega}(t) i X_2 \\
    &=U^{(1)}_{\omega}(t) F_{\infty}(\omega) \cap  i  U^{(2)}_{\omega}(t) X_2
    \\
    &\subset F_{\infty}(\theta_{t}\omega) \cap  i X_2  \\
    &= i (i^{-1}(F_{\infty}(\theta_{t}\omega))). 
\end{align*}
Hence, the decomposition is invariant. 
\smallskip\par\noindent
\eqref{Osel:case_I}\eqref{O1:2} in order to show 
\begin{equation*}
    \bigoplus\limits_{j=1}^{k} U^{(1,2)}_{\theta_{-1}\omega}(1) E_j(\theta_{-1}\omega) \oplus i^{-1}(F_{\infty}(\omega)) =X_2 
\end{equation*}
it suffices to show that the spaces $U^{(1,2)}_{\theta_{-1}\omega}(1) E_j(\theta_{-1}\omega)$ ($j=1,\dots,k$) and $i^{-1}(F_{\infty}(\omega))$ are linearly independent and 
\begin{equation*}
    \sum\limits_{j=1}^{k} U^{(1,2)}_{\theta_{-1}\omega}(1) E_j(\theta_{-1}\omega) + i^{-1}(F_{\infty}(\omega)) =X_2. 
\end{equation*}
Therefore, fix $e^{(2)}_j\in U^{(1,2)}_{\theta_{-1}\omega}(1) E_j(\theta_{-1}\omega)$ for each $j=1,\dots,k$ and $f^{(2)}\in i^{-1}(F_{\infty}(\omega))$ such that $e^{(2)}_1+  e^{(2)}_2+\dots+ e^{(2)}_k+f^{(2)} = 0$. Since
\begin{equation*}
    0 = i \Big( \sum\limits_{j=1}^{k} e^{(2)}_j+f^{(2)} \Big) = \sum\limits_{j=1}^{k} ie^{(2)}_j+if^{(2)}, 
\end{equation*}  
and by linear independence of the subspaces  $E_j(\omega)$ ($j=1,\dots,k$) and $F_{\infty}(\omega)$ we have $ie^{(2)}_1=\dots=ie^{(2)}_k=if^{(2)} = 0$, so $e^{(2)}_1=\dots=e^{(2)}_k=f^{(2)} = 0$. It remains to show the sets equality. The inclusion $(\subset)$ is obvious. Therefore, fix $x\in X_2$. Hence, $ix\in X_1$ there exists a (unique) representation $ix=e^{(1)}_1+  e^{(1)}_2+\dots+ e^{(1)}_k+f^{(1)}$
for some $e^{(1)}_j\in E_{j}(\omega)$ and $f^{(1)}\in F_{\infty}(\omega)$. However, from the first part of the Claim~\ref{claim:E_2-subspace}\eqref{claim:sub-space-i} we can conclude that for any $e^{(1)}_j\in E_{j}(\omega)$ there exists $e^{(2)}_j\in U^{(1,2)}_{\theta_{-1}\omega}(1) E_j(\theta_{-1}\omega)$ such that $e^{(1)}_j=ie^{(2)}_j$. We can find $f^{(2)}\in F_{\infty}(\omega)$ such that $if^{(2)}=f^{(1)}$, namely $f^{(2)}= x-e^{(2)}_1-\dots-e^{(2)}_k$. So, the inclusion $(\supset)$ is done.  

\smallskip\par\noindent
\eqref{Osel:case_I}\eqref{O1:3} 
Fix $l=1,\dots,k$ and $u\in X_2$. In order to show that the family of projections associated with the decomposition
\begin{equation}
\label{eq:dec_X_2}
    \Bigl(\bigoplus\limits_{j=1}^{l} U^{(1,2)}_{\theta_{-1}\omega}(1) E_j(\theta_{-1}\omega) \Bigr) \oplus  \Bigl(  \bigoplus\limits_{j=l+1}^{k} U^{(1,2)}_{\theta_{-1}\omega}(1) E_j(\theta_{-1}\omega) \oplus i^{-1}(F_{\infty}(\omega)) \Bigr)  = X_2
\end{equation}
along the first component is strongly measurable it suffices to see that for fixed set $A\in\mathfrak{B}(X_2)$ we have 
\begin{align*}
    (P^{(2)}(\cdot)u )^{-1}(A) &= (P^{(2)}(\cdot)u )^{-1}(i^{-1}(B))\\
    &=(i\circ P^{(2)}(\cdot)u )^{-1}(B)\\
    &= ( P^{(1)}(\cdot) i u )^{-1}(B)\\
    &\in \mathfrak{F},
\end{align*}
where $P^{(2)}(\omega)$ is the projection along $U^{(1,2)}_{\theta_{-1}\omega}(1) E_1(\theta_{-1}\omega)\oplus\dots\oplus U^{(1,2)}_{\theta_{-1}\omega}(1) E_l(\theta_{-1}\omega) $ and set $B\in\mathfrak{B}(X_1)$ is the set from the assumption~\eqref{a:sigma-compatibility} such that $A=i^{-1}(B)$. It remains to show that the family of projections associated with the decomposition~\eqref{eq:dec_X_2} is tempered. The definition refers to projections onto the finite codimensional closed vector subspaces $F(\omega)$. However, it is a well-known fact that it is sufficient to equivalently demonstrate the temperedness of projections onto finite-dimensional vector subspaces (i.e. first component of~\eqref{eq:dec_X_2}, see~\cite[Remark 2.1]{Marek-Janusz-Sylvia-Rafa}).
We start by demonstrating that the projections onto $U^{(1,2)}_{\theta_{-1}\omega}(1) E_j(\theta_{-1}\omega)$ for any $j=1,\dots, k$ are tempered. Let us fix $j$. Subsequently, in the inductive process, we will show that temperedness holds universally. Therefore, let $\widetilde{P}_j^{(2)}(\omega)$ denote the complementary part of $P_j^{(2)}(\omega)$, namely the projection onto $U^{(1,2)}_{\theta_{-1}\omega}(1) E_j(\theta_{-1}\omega)$. Using the results from Lemma~\ref{lemma:Lap-Projection-estim}, we can conclude that for any $u\in X_2$ we have 
\begin{equation*}
    c(t) \norm{\widetilde{P}_j^{(2)}(\theta_t\omega) u }_2 \le \norm{i \widetilde{P}^{(2)}(\theta_t\omega) u}_1 \le \norm{i}_{2,1} \, \norm{\widetilde{P}_j^{(2)}(\theta_t\omega) u }_2. 
\end{equation*}
In fact, we have 
\begin{align*}
   1\le \norm{\widetilde{P}_j^{(2)}(\theta_t\omega)}_{2,2} & =   \sup \Big\{ \frac{\norm{\widetilde{P}_j^{(2)}(\theta_t\omega) u}_2 }{\norm{u}_2}: u\in X_2\setminus\{0\} \Big\} \\
    & \le  \sup \Big\{ \frac{\norm{\widetilde{P}_j^{(1)}(\theta_t\omega) i u}_1 }{c(t)\norm{iu}_1} \cdot \frac{\norm{iu}_1}{\norm{u}_2}: u\in X_2 \setminus\{0\} \Big\} \\
    & \le \frac{\norm{i}_{2,1}}{c(t)}\norm{\widetilde{P}_j^{(1)}(\theta_t\omega)}_{1,1}.  
\end{align*}
 Hence, $\ln \norm{\widetilde{P}^{(2)}(\theta_t\omega)}_{2,2}/t \to 0$ as $t\to\infty$. Furthermore, in the spirit of Tanny's theorem, when $t\to -\infty$, we can obtain $\ln \norm{\widetilde{P}^{(2)}(\theta_t\omega)}_{2,2}/t \to 0$ as well, see~\cite[Lemma C2]{GTQu}. In conclusion, let us observe that generally for the projection $\widetilde{P}_{1,\dots,l}^{(2)}(\theta_t\omega) $ onto the first component of~\eqref{eq:dec_X_2} we have   
 \begin{equation*}
    0 \le \frac{\ln \norm{\widetilde{P}_{1,\dots,l}^{(2)}(\theta_t\omega)}_{2,2}}{t}\le \frac{\ln l+   \max_{j} \ln \norm{ \widetilde{P}_{j}^{(2)}(\theta_t\omega)}_{2,2}}{t}. 
 \end{equation*}
 Hence, $\ln \norm{\widetilde{P}_{1,\dots,l}^{(2)}(\theta_t\omega)}_{2,2}/t\to 0$ as $t\to\infty$ and again by~\cite[Lemma C2]{GTQu} we can obtain this result for $t\to-\infty$.
\smallskip\par\noindent
\eqref{Osel:case_I}\eqref{O1:4} fix $j=1\dots k$, $\omega\in\Omega_0$ and $u\in U^{(1,2)}_{\theta_{-1}\omega}(1) E_j(\theta_{-1}\omega)$. Hence, from Claim~\ref{claim:E_2-subspace}\eqref{claim:sub-space-i} $iu\in E_j(\omega)$. Therefore, by assumption \eqref{Osel:case_I}\eqref{O1:4} for $\Phi^{(1)}$,  $\ln\norm{U^{(1)}_{\omega}(t)iu}_{1}/t\allowbreak\to\lambda_j$. Moreover, we have two inequalities, first one from the assumption~\eqref{a:commutation}: $\norm{U^{(1)}_{\omega}(t)\circ iu}_1 = \norm{i \circ U^{(2)}_{\omega}(t) u}_1 \le \norm{i}_{2,1} \norm{ U^{(2)}_{\omega}(t) u}_2$ 
and the second one, $\norm{U^{(2)}_{\omega}(t)u}_2 \le \norm{U^{(1,2)}_{\theta_{t-1}\omega}(1)}_{1,2} \, \norm{U^{(1)}_{\omega}(t-1)\circ i u }_1 $,  
from the fact that $U^{(2)}_{\omega}(t)u = U^{(1,2)}_{\theta_{t-1}\omega}(1)\circ U^{(1)}_{\omega}(t-1)\circ i u$. Therefore, 
\begin{multline*}
       \frac{1}{t}\ln\norm{U^{(1)}_{\omega}(t)\circ i u}_1 -\frac{1}{t}\ln\norm{i}_{2,1} \le \frac{1}{t}\ln\norm{U^{(2)}_{\omega}(t)u}_2 \\ \le \frac{1}{t}\ln\norm{U^{(1,2)}_{\theta_{t-1}\omega}(1)}_{1, 2} +\frac{1}{t}\ln\norm{U^{(1)}_{\omega}(t-1)\circ i u}_1.
\end{multline*}
Thus, we have proven \eqref{Osel:case_I}\eqref{O1:4} for $\Phi^{(2)}$. 
\smallskip\par\noindent
\eqref{Osel:case_I}\eqref{O1:7}
Fix $\omega$ and 
\begin{equation*}
    u^{(2)}\in  \bigoplus\limits_{m=j}^{k} U^{(1,2)}_{\theta_{-1}\omega}(1) E_m(\theta_{-1}\omega) \oplus i^{-1}(F_{\infty}(\omega)). 
\end{equation*}
To begin with, let us note that Claim~\ref{claim:E_2-subspace} allows for demonstrating only the equivalence between the existence of negative semiorbits (in $X_1$ and $X_2$) on which the appropriate Lyapunov exponent is achieved, since the membership of $u^{(2)}$ in the subspace $U^{(1,2)}_{\theta_{-1}\omega}(1) E_j(\theta_{-1}\omega)$ is equivalent to the membership of $iu^{(2)}$ in the subspace $ E_j(\omega)$. 

Assume that there exists a negative semiorbit $\tilde{u}^{(2)}(s)$ in $X_2$ passing through $(\omega, u^{(2)})$.  We commence by demonstrating that within the space $X_1$, there exists a negative semiorbit passing through $(\omega,iu^{(2)})$, such that the appropriate Lyapunov exponent is attainable. Namely, we define $\tilde{u}^{(1)}\colon (-\infty ,0] \to X_1$ by $\tilde{u}^{(1)}(s)=i\tilde{u}^{(2)}(s)$; such Lyapunov exponent in $X_2$ is (by assumption) $\lambda_j$. Upon examination, it becomes evident that the thus-defined $\tilde{u}^{(1)}$ indeed is a negative semi-orbit. Indeed $\tilde{u}^{(1)}(0)=iu^{(2)}$ and 
\begin{equation*}
  \tilde{u}^{(1)}(s+t) =  i \tilde{u}^{(2)}(s+t) = iU^{(2)}_{\theta_{s}\omega}(t) \tilde{u}^{(2)}(s) = U^{(1)}_{\theta_{s}\omega}(t) i \tilde{u}^{(2)}(s) = U^{(1)}_{\theta_{s}\omega}(t)   \tilde{u}^{(1)}(s). 
\end{equation*}
Moreover, $\tilde{u}^{(1)}$ is $(\mathfrak{B}((-\infty,0]),
\mathfrak{B}(X_1))$\nobreakdash-\hspace{0pt}measurable as a continuous composition. Furthermore, $\ln \norm{\tilde{u}^{(1)}(s)}_1/ s\to \lambda_j$ as $s\to-\infty$ since 
\begin{multline*}
    \norm{\tilde{u}^{(2)}(s+1)}_2 = \norm{U^{(1,2)}_{\theta_{s}\omega}(1)i \tilde{u}^{(2)}(s) }_2  \\
    \le \norm{U^{(1,2)}_{\theta_{s}\omega}(1)}_{1, 2} \,  \norm{i \tilde{u}^{(2)}(s) }_1 = \norm{U^{(1,2)}_{\theta_{s}\omega}(1)}_{1,2} \,  \norm{\tilde{u}^{(1)}(s)}_1 \\ 
    \le \norm{U^{(1,2)}_{\theta_{s}\omega}(1)}_{1,2} \norm{i}_{2,1} \norm{\tilde{u}^{(2)}(s)}_2. 
\end{multline*}
Conversely, if $\tilde{u}^{(1)}$ is a negative semiorbit on $X_1$, then $\tilde{u}^{(2)}$ defined as $\tilde{u}^{(2)}(s)=U^{(1,2)}_{\theta_{s-1}\omega}(1) \tilde{u}^{(1)}(s-1)$ serves as a negative semiorbit on $X_2$. Clearly,
\begin{equation*}
    i\tilde{u}^{(2)}(0) = iU^{(1,2)}_{\theta_{-1}\omega}(1) \tilde{u}^{(1)}(-1) = U^{(1)}_{\theta_{-1}\omega}(1) \tilde{u}^{(1)}(-1) = \tilde{u}^{(1)}(0) = u^{(1)}=iu^{(2)}, 
\end{equation*}
and for any  $s \le 0$, $t\geq 0$  such that  $s+t\le 0$ we have 
\begin{equation*}
    \begin{split}
    \tilde{u}^{(2)}(s+t) &= U^{(1,2)}_{\theta_{s-1+t}\omega}(1) \tilde{u}^{(1)}(s-1+t) \\
    &=U^{(1,2)}_{\theta_{s-1+t}\omega}(1) \circ U^{(1)}_{\theta_{s-1}\omega}(t)  \tilde{u}^{(1)}(s-1) \\
    &=  U^{(2)}_{\theta_{s}\omega}(t)\circ  U^{(1,2)}_{\theta_{s-1}\omega}(1)  \tilde{u}^{(1)}(s-1) \\
    &=  U^{(2)}_{\theta_{s}\omega}(t)    \tilde{u}^{(2)}(s).
\end{split}
\end{equation*}
Moreover, $\ln \norm{\tilde{u}^{(2)}(s)}_2/ s\to \lambda_j$ as $s\to-\infty$ since
\begin{equation*}
    \norm{\tilde{u}^{(1)}(s)}_1 \le \norm{i}_{2,1} \, \norm{\tilde{u}^{(2)}(s)}_2 \le \norm{i}_{2,1} \, \norm{U^{(1,2)}_{\theta_{s-1}\omega}(1)}_{1,2} \, \norm{\tilde{u}^{(1)}(s-1)}_1. 
\end{equation*}
It remains to show the $(\mathfrak{B}((-\infty,0]),
\mathfrak{B}(X_2))$\nobreakdash-\hspace{0pt}measurability of the map  
\begin{equation*}
    \big[ \, (-\infty,0]  \ni s \mapsto  U^{(1,2)}_{\theta_{s-1}\omega}(1) \tilde{u}^{(1)}(s-1) \in X_2\, \big].
\end{equation*}
For this purpose, let us observe that it can be rewritten as the measurable composition 
\begin{equation*}
    s\mapsto (\theta_{s-1}\omega,s)\mapsto (U_{\theta_{-1}\omega}^{(1,2)}(1),  \tilde{u}^{(1)}(s-1) ) \mapsto U^{(1,2)}_{\theta_{s-1}\omega}(1) \tilde{u}^{(1)}(s-1). 
\end{equation*}
The first mapping is $(\mathfrak{B}((-\infty,0]),
\mathfrak{F}\otimes \mathfrak{B}((-\infty,0]))$\nobreakdash-\hspace{0pt}measurable, the second mapping is $(\mathfrak{F}\otimes \mathfrak{B}((-\infty,0]), \mathfrak{B}(\mathcal{L}_{\mathrm{s}}(X_1,X_2))\otimes\mathfrak{B}(X_1))$\nobreakdash-\hspace{0pt}measurable. By the assumption~\eqref{a:separable} and Lemma~\ref{lemma:strong_measurable_paring}, the pairing is $(\mathfrak{B}(\mathcal{L}_{\mathrm{s}}(X_1,X_2))\otimes\mathfrak{B}(X_1), \allowbreak\mathfrak{B}(X_2) )$\nobreakdash-\hspace{0pt}measurable. 

\smallskip\par\noindent
\eqref{Osel:case_I}\eqref{O1:8}
Firstly, we claim that
\begin{gather*}
U^{(2)}_{\omega}(t)= U^{(1,2)}_{\theta_{t-1}\omega}(1)\circ U^{(1)}_{\omega}(t-1) \circ i. 
\end{gather*}
Together with $i(i^{-1}(F_{\infty}(\omega)))\subset F_{\infty}(\omega)$ we can see that the inequality 
\begin{align*}
    \norm{U^{(2)}_{\omega}(t){\restriction}_{i^{-1}(F_{\infty}(\omega))}}_{2,2} & \le  \norm{i}_{2,1} \, \norm{U^{(1,2)}_{\theta_{t-1}\omega}(1)}_{1,2} \,  \norm{U^{(1)}_{\omega}(t-1){\restriction}_{i(i^{-1}(F_{\infty}(\omega)))}}_{1,1} \\
    &\le \norm{i}_{2,1} \, \norm{U^{(1,2)}_{\theta_{t-1}\omega}(1)}_{1,2} \, \norm{U^{(1)}_{\omega}(t-1){\restriction}_{F_{\infty}(\omega)}}_{1,1}
\end{align*}
holds for any $\omega\in\Omega$ and $t\ge 1$. So, 
\begin{multline*}
         \frac{1}{t}\ln \norm{U^{(2)}_{\omega}(t){\restriction}_{i^{-1}(F_{\infty}(\omega))}}_{2,2} \le \frac{1}{t}\ln\norm{i}_{2,1} + \frac{1}{t}\ln\norm{U^{(1,2)}_{\theta_{t-1}\omega}(1)}_{1,2} + \frac{1}{t}\ln \norm{U^{(1)}_{\omega}(t-1){\restriction}_{F_{\infty}(\omega)}}_{1,1}, 
\end{multline*}
with under assumptions~\eqref{a:emdeding} and \eqref{a:connector} suffices to finish the proof of this part. 

Furthermore, we have concluded the first and, as it turns out, the main part of the proof. The remaining part \eqref{Osel:case_II} pertains to the scenario in which there is a countable number of Lyapunov exponents. Nevertheless, it is easy to notice that practically the entire proof is replicated word for word in this situation. 
However, for the sake of completeness, let us note the fact that if $\Phi^{(1)}$ admits Oseledets decomposition in the case \eqref{Osel:case_II} with parameters 
\begin{equation*}
     (\Omega_0, (\lambda_j)_{j\in\NN}, \{E_j(\omega)\}_{\omega\in\Omega_0,j\in\NN}, \{ F_{j}(\omega)\}_{\omega \in \Omega_0, j\in\NN}) 
 \end{equation*}
then $\Phi^{(2)}$ also admits Oseledets decomposition in the case \eqref{Osel:case_II}, with parameters 
\begin{equation*}
     (\Omega_0, (\lambda_j)_{j\in\NN}, \{U^{(1,2)}_{\theta_{-1}\omega}(1) E_j(\theta_{-1}\omega)\}_{\omega\in\Omega_0,j\in\NN}, \{i^{-1}(F_{j}(\omega))\}_{\omega \in \Omega_0, j\in\NN}). 
 \end{equation*} 
\end{proof}

\section*{Assumption discussion}
\label{sec:Appendix_B}
In this section, we aim to discuss the scenario where assumption~\eqref{a:sigma-compatibility} is met. Indeed, we will show something more, namely that this assumption is fulfilled when, in particular, $X_1=L_p([-1,0])$ ($1<p<\infty$) and $X_2=C([-1,0])$, and that measurability of function factors follows from it. 
\begin{lemma}
 Let $i:C([-1,0])\to L_p([-1,0])$ be the continuous injection defined by $i(f)=f$. Moreover, let $w^{(1)}:\Omega\to L_p([-1,0])$ be a $(\mathfrak{F},\mathfrak{B}(L_p([-1,0])))$-measurable function with the factor $w^{(2)}:\Omega\to C([-1,0])$, i.e. $w^{(1)} =i \circ w^{(2)} $. Then $w^{(2)}:\Omega\to C([-1,0])$ is $(\mathfrak{F},\mathfrak{B}(C([-1,0])))$-measurable. 
\end{lemma}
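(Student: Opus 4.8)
The plan is to reduce everything to the already-known measurability of $w^{(1)}$ by pushing Borel sets of $C([-1,0])$ \emph{forward} through $i$, rather than pulling them back. The obstacle is that $i$ is only continuous, so continuity by itself gives measurability in the unhelpful direction: $i^{-1}(B)\in\mathfrak{B}(C([-1,0]))$ whenever $B\in\mathfrak{B}(L_p([-1,0]))$. What is actually needed is that the \emph{image} under $i$ of a Borel set is again Borel, and here the injectivity of $i$ is decisive.

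First I would record that both $C([-1,0])$, equipped with the supremum norm, and $L_p([-1,0])$ with $1<p<\infty$, are separable Banach spaces, hence Polish (separable and completely metrizable). This places us in the setting of standard Borel spaces, where the relevant descriptive-set-theoretic tools apply. The heart of the argument is then the Lusin--Souslin theorem: a continuous injective map between Polish spaces carries Borel sets to Borel sets and is in fact a Borel isomorphism onto its (Borel) image. Applied to $i\colon C([-1,0])\to L_p([-1,0])$, this yields that for every $A\in\mathfrak{B}(C([-1,0]))$ the image $i(A)$ belongs to $\mathfrak{B}(L_p([-1,0]))$. I expect this to be the main --- indeed the only nonroutine --- step; it is precisely the point at which separability (in force throughout) cannot be dispensed with, and it is of the same type as the results invoked via~\cite{V.B.}.

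Once the image step is available, the conclusion is a short set-theoretic manipulation exploiting the injectivity of $i$ together with the factorization $w^{(1)}=i\circ w^{(2)}$. Fix $A\in\mathfrak{B}(C([-1,0]))$. Because $i$ is injective we have $A=i^{-1}(i(A))$, and therefore
\[
(w^{(2)})^{-1}(A)=(w^{(2)})^{-1}\bigl(i^{-1}(i(A))\bigr)=(i\circ w^{(2)})^{-1}(i(A))=(w^{(1)})^{-1}(i(A)).
\]
Since $i(A)\in\mathfrak{B}(L_p([-1,0]))$ and $w^{(1)}$ is $(\mathfrak{F},\mathfrak{B}(L_p([-1,0])))$-measurable, the right-hand side lies in $\mathfrak{F}$; hence $(w^{(2)})^{-1}(A)\in\mathfrak{F}$, which is exactly the asserted $(\mathfrak{F},\mathfrak{B}(C([-1,0])))$-measurability of $w^{(2)}$.

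As a byproduct, the same reasoning establishes the concrete instance of assumption~\eqref{a:sigma-compatibility} claimed in the surrounding discussion: since $i^{-1}(B)$ is Borel in $C([-1,0])$ for every $B\in\mathfrak{B}(L_p([-1,0]))$ by continuity, while the Lusin--Souslin step gives $A=i^{-1}(i(A))$ for every $A\in\mathfrak{B}(C([-1,0]))$, one obtains the identity $\mathfrak{B}(C([-1,0]))=\{\,i^{-1}(B):B\in\mathfrak{B}(L_p([-1,0]))\,\}$. I would therefore present the measurability conclusion and this $\sigma$-compatibility identity as two facets of the same Borel-isomorphism statement.
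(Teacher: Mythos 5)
Your proof is correct, but it takes a genuinely different route from the paper's. The paper argues in the ``pullback'' direction and entirely by hand: it exhibits a concrete $L_p$-closed set $F=\{f\in L_p: -1\le f\le 1 \text{ a.e.}\}$ whose preimage under $i$ is the closed unit ball of $C([-1,0])$, then uses separability of $C([-1,0])$ to write every open set as a countable union of closed balls, hence as $i^{-1}$ of a countable union of $L_p$-closed sets, and concludes via $(w^{(2)})^{-1}(D)=(w^{(1)})^{-1}(\bigcup_n F_n)$. You instead push forward: you invoke the Lusin--Souslin theorem to conclude that the continuous injection $i$ between the Polish spaces $C([-1,0])$ and $L_p([-1,0])$ maps Borel sets to Borel sets, and then use injectivity to write $A=i^{-1}(i(A))$ and reduce to the measurability of $w^{(1)}$ exactly as the paper does in its final display. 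Both arguments are complete. What each buys: the paper's proof is elementary and self-contained, using only the specific interplay between the sup-norm ball and a.e.\ bounds (and thereby making the witness set $B$ in assumption~\eqref{a:sigma-compatibility} explicit), whereas yours imports a nontrivial descriptive-set-theoretic theorem but in exchange is far more general --- it shows that assumption~\eqref{a:sigma-compatibility} is automatic whenever \eqref{a:emdeding} and \eqref{a:separable} hold, i.e.\ for \emph{any} continuous injection between separable Banach spaces, not just $C\hookrightarrow L_p$. That observation would let the paper drop~\eqref{a:sigma-compatibility} as an independent hypothesis, which is a genuine strengthening worth pointing out; the only cost is the reliance on Lusin--Souslin, which sits well above the elementary level of the rest of the section.
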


\begin{proof}
The proof is based on observation that every closed ball, specifically the unit ball, in $C([-1,0])$ corresponds to a pre-image of $i$ of some $L_p$-closed subset $F\subset L_p([-1,0])$, i.e. $\bar{B}_{C([-1,0])}(0,1)=i^{-1}(F)$ for some $L_p$-closed $F$. The set $F\vcentcolon=\{i(f)\in L_p([-1,0]):-1\le f(x)\le 1 \text{ for a.e. } x\in [-1,0]\}$ proves to be a witness, as $L_p([-1,0]) \setminus F$ is $L_p$-open. Indeed, for $g\in L_p([-1,0]) \setminus F$ there exists a Lebesgue-measurable set $M\subset [-1,0]$ such that its Lebesgue measure $\lambda(M)>0$ and $\epsilon>0$ such that $g(x)\ge 1+\epsilon$ for all $x\in M$. Therefore, for $f\in F$ we have $\|g-f\|_{L_p}\ge \epsilon \lambda^{1/p}(M)$ so $B_{L_p(-1,0)}(g,\epsilon \lambda^{1/p}(M))\subset L_p([-1,0]) \setminus F$. It remains to show $i^{-1}(F)=\bar{B}_{C([-1,0])}(0,1)$. However, it can be easily done via definition of pre-image since, conditions $-1\le f(x)\le 1$ for a.e. $x$ and $-1\le f(x)\le 1$ for all $x$ are equivalent for continuous functions. 
\smallskip\par
As a final step, we demonstrate that the map $w^{(2)}$ is $(\mathfrak{F},\mathfrak{B}(C([-1,0])))$-measurable. Let $D$ be an open subset of $C([-1,0])$ in the sense of the $C([-1,0])$ topology. Since $C([-1,0])$ is separable, it has a countable base, and $D$ can be expressed as a countable union of open balls. Moreover, open balls can be written as countable union of closed ones. As a consequence the set $D$ is a countable union of pre-images of $L_p$-closed sets. The observation that 
\begin{equation*}
    (w^{(2)})^{-1}[D]=(w^{(2)})^{-1}\Big( \bigcup_{n\in\NN} i^{-1}(F_n) \Big)=(i\circ w^{(2)})^{-1}\Big(\bigcup_{n\in\NN} F_n\Big)=(w^{(1)})^{-1}\Big (\bigcup_{n\in\NN} F_n\Big )\in\mathfrak{F} 
\end{equation*}
finishes the proof. 
\end{proof}

\textbf{Acknowledgment.}  

I am grateful to Janusz \JM\ for inspiring conversations.

\end{document}